\newtheorem{theorem}{Theorem}[section]
\newtheorem{corollary}{Corollary}[section]
\theoremstyle{definition}
\newtheorem{example}{Example}[section]
\theoremstyle{definition}
\newtheorem{remark}{Remark}[section]
\numberwithin{equation}{section}
\newcommand\blfootnote[1]{\begingroup\renewcommand\thefootnote{}\footnote{#1}\addtocounter{footnote}{-1}\endgroup}
\DeclareTextCompositeCommand{\u}{PD1}{\i}{i}
\begin{document}

\title{
{\bf\Large
A note on a fixed point theorem \\ on topological cylinders }\footnote{Work performed under the auspices of the
Grup\-po Na\-zio\-na\-le per l'Anali\-si Ma\-te\-ma\-ti\-ca, la Pro\-ba\-bi\-li\-t\`{a} e le lo\-ro
Appli\-ca\-zio\-ni (GNAMPA) of the Isti\-tu\-to Na\-zio\-na\-le di Al\-ta Ma\-te\-ma\-ti\-ca (INdAM).}}

\author{
{\bf\large Guglielmo Feltrin}
\vspace{1mm}\\
{\it\small SISSA - International School for Advanced Studies}\\
{\it\small via Bonomea 265}, {\it\small 34136 Trieste, Italy}\\
{\it\small e-mail: guglielmo.feltrin@sissa.it}\vspace{1mm}}

\date{}

\maketitle

\vspace{-2mm}

\begin{abstract}
\noindent
We present a fixed point theorem on topological cylinders in normed linear spaces 
for maps satisfying a property of stretching a space along paths. 
This result is a generalization of a similar theorem obtained by D.~Papini and F.~Zanolin. 
In view of the main result we discuss the existence of fixed points for maps defined on different types of domains
and we propose alternative proofs for classical fixed point theorems, as Brouwer, Schauder and Krasnosel'ski\u{\i} ones.
\blfootnote{\textit{2010 Mathematics Subject Classification:} 47H10, 37C25, 47H11, 54H25.}
\blfootnote{\textit{Keywords:} Fixed point theorems, fixed point index, Brouwer fixed point theorem, Schauder fixed point theorem, Krasnosel'ski\u{\i} fixed point theorem in cones.}
\end{abstract}

\section{Introduction}\label{section1}

Fixed point theorems in finite and infinite dimensional spaces have many applications, for instance, 
to the search of positive solutions of integral equations, to the study of periodic ordinary differential equations and in chaos theory.
We mention \cite{AvAnHe-2011,ChTo-2007,ErHuWa-1994,ErWa-1994,FrTo-2008,GrKoWa-2008,LeWi-1980,PiZa-2005,PiZa-2007,Pr-2007,To-2003}, as examples.

Recently, with respect to this issue, fixed point theorems has received lot of attention.
Accordingly, in several works classical results has been reinterpreted.
For example, in \cite{Kw-2008fpta,Kw-2008} the author started from Krasnosel'ski\u{\i} cone fixed point theorem to present more general results in cones and cylinders. 
Or still, in \cite{Mar-2006} there is an application of a general version of the classical Poin\-ca\-r\'e-Miranda zeros theorem, 
which is equivalent to Brouwer fixed point theorem, as well known.
As a last example, we mention \cite{Ma-2013}, where the author has proposed elementary proofs of generalization of fixed point theorems in $\mathbb{R}^{n}$, which are
useful tools for the study of ordinary differential equations.

\medskip

The present paper deals with the search and the localization of fixed points for compact maps defined on domains of a normed linear space.
Even if there are more applications in $\mathbb{R}^{n}$ than in infinite dimensional spaces, we consider maps defined on very general topological spaces.
Our aim is to obtain new fixed point theorems that would allow to achieve better criteria for the existence and multiplicity of solutions 
to nonlinear differential and integral equations.

In more detail, our starting point for the present paper is the work of D.~Papini and F.~Zanolin, \cite{PaZa-2007}.
The two authors have studied fixed point theorems for maps satisfying a property of stretching a space along paths connecting two of its own disjoint subsets.
In \cite{PaZa-2007} they have considered cylinders of the form 
\begin{equation*}
\mathopen{[}a,b\mathclose{]}\times B[0,R],
\end{equation*}
so with ``base'' a ball in $\mathbb{R}^{n}$, while our goal is to work with domains of the form
\begin{equation*}
\mathopen{[}a,b\mathclose{]}\times A,
\end{equation*}
where $A$ is an \textit{absolute retract}. 
Afterwards we prove that the existence of fixed points remains valid under homeomorphic transformation of the cylinder.

\medskip

The plan of the paper is the following. In Section~\ref{section2} we present our main results. 
First of all we generalize \cite[Theorem~6]{PaZa-2007} by considering a cylinder with
an absolute retract as ``base'' and a compact map defined over it. Under the assumption of ``stretching along the paths'', 
we obtain an existence result for fixed points in a particular region of the cylinder, using the fixed point index and in the same spirit of \cite{PaZa-2007}.
Subsequently, we discuss further possible generalization of the theorem and
we verify that the property of ``stretching along the paths'' is preserved under homeomorphisms.

In Section~\ref{section3} we review some classical fixed point theorems, as Brouwer and Schauder theorems, from the perspective of the main result. 
Using simple tricks, we show that this results are direct applications of our main theorem.

Section~\ref{section4} is devoted to the study of maps defined on domains of different type. In more detail, we consider
particular region of cones in normed linear spaces, as in Krasnosel'ski\u{\i}'s cone fixed point theorem.
As we shall see, the hypotheses of our main result are clearly of expansive type, hence it allows us to analyze only the expansive form of Krasnosel'ski\u{\i} theorem. 
For this reason, in this section we introduce a corollary useful to demonstrate Krasnosel'ski\u{\i}'s compressive form and other compressive-type fixed point results.
In addition, we explore other fixed point theorems related to the main result.

We conclude our work with an appendix dedicated to the fixed point index. We recall the main properties and we enunciate 
the Leray-Schauder continuation principle. This tools are used during the proof of the main theorem in Section~\ref{section2}.

\medskip

Now we introduce some symbols and some notations. We denote by $\mathbb{Q}$ and $\mathbb{R}$ the usual numerical sets
and let $\mathbb{R}^{+}:=\mathopen{[}0,+\infty\mathclose{[}$.

Given a topological space $(X,\tau)$ and a subset $A\subseteq X$, we denote by $\overline{A}$ its closure, 
with $\text{\rm int}(A)$ its inner part and with $\partial A$ its boundary. 
If $A,B\subseteq X$, with $A\setminus B$ we mean the relative complement of $B$ in $A$.

What is dealt in the article takes place in the class of metrizable spaces. 
Usually we denote by  $(X,\|\cdot\|)$ or simply by $X$ a normed linear space, 
where $\|\cdot\|$ is its norm. 
The symbols $B(x_{0},r)$ and $B[x_{0},r]$, where $x_{0}\in X$ and $r>0$, represent the open and closed ball of centre $x_{0}$ and radius $r$ respectively, i.e.~
\begin{equation*}
B(x_{0},r):=\bigl{\{}x\in X\colon \|x-x_{0}\|<r\bigr{\}}, \quad B[x_{0},r]:=\bigl{\{}x\in X\colon \|x-x_{0}\|\leq r\bigr{\}}.
\end{equation*}

We indicate with
\begin{equation*}
\text{dist}(A,B):=\inf_{x\in A, \, y\in B} \|x-y\|
\end{equation*}
the usual distance between two sets $A,B\subseteq X$.

We denote by $Id$ or $Id_{X}$ the identity on the space $X$. Given a function $f$, 
$f|_{D}$ represents the restriction of the function $f$ in $D$, where $D$ is a subset of the domain of $f$.

\section{Main results}\label{section2}

In this section we present our main result (Theorem~\ref{ThPapiniZanolinretract}) which deals with cylinders having a topological retract as ``base''.  
Then we prove the validity of the same theorem for other type of domains.

Let $(X,\|\cdot\|)$ be a normed linear space and let $D$ be a subset of $\mathbb{R}\times X$. Consider a map
\begin{equation*}
\phi=(\phi_{1},\phi_{2})\colon D\to\mathbb{R}\times X.
\end{equation*}
Our main goal is to provide an existence result for fixed points of $\phi$. 
In other words, we want to prove the existence of elements
$\tilde{z}=(\tilde{t},\tilde{x})\in D$ such that
\begin{equation*}
\begin{cases}
\, \tilde{t}=\phi_{1}(\tilde{t},\tilde{x})\\
\, \tilde{x}=\phi_{2}(\tilde{t},\tilde{x}). 
\end{cases}
\end{equation*}

Now we introduce some preliminary definitions and notations.
First of all we recall that a nonempty subset $A$ of a topological space $Y$ is a \textit{retract} of $Y$ if there exists a
continuous map $r\colon Y\to A$ (called \textit{retraction}), such that $r|_{A}=Id_{A}$.

Let $A$ be a retract of $X$ and fix $a,b\in\mathbb{R}$ with $a<b$. We denote by
\begin{equation*}
\mathcal{C}:=\mathopen{[}a,b\mathclose{]}\times A
\end{equation*}
the \textit{cylinder} with ``base'' the retract $A$ and ``height'' the interval $\mathopen{[}a,b\mathclose{]}$.
We also set
\begin{equation*}
\mathcal{C}_{l}:=\{a\}\times A \quad \text{ and } \quad \mathcal{C}_{r}:=\{b\}\times A
\end{equation*}
the cylinder's left base and the cylinder's right base respectively.

In the following we will put our attention on \textit{paths} contained in $\mathcal{C}$ or in other subsets of $\mathbb{R}\times X$. 
With the term \textit{path} we mean an element $(\sigma,\overline{\sigma})$, where $\sigma$ is a continuous map defined on a compact interval 
$I\subseteq\mathbb{R}$ (usually we take $I=\mathopen{[}0,1\mathclose{]}$) and with values on the normed linear space $\mathbb{R}\times X$, 
and $\overline{\sigma}:=\sigma(I)\subseteq \mathbb{R}\times X$ is the \textit{support} (image) of $\sigma$. Furthermore, 
we say that $(\gamma,\overline{\gamma})$ is a \textit{sub-path} of the path $(\sigma,\overline{\sigma})$ (defined on $I$) 
if $\gamma\colon\mathopen{[}s_{0},s_{1}\mathclose{]}(\subseteq I)\to \mathbb{R}\times X$ and $\gamma=\sigma|_{\mathopen{[}s_{0},s_{1}\mathclose{]}}$.

In order to avoid too heavy notation we simply employ $\sigma$ to denote the path $(\sigma,\overline{\sigma})$. 
In addition, we often use the symbol $\sigma$ meaning its support; for example, 
if $Z\subseteq \mathbb{R}\times X$, we write $\sigma\subseteq Z$ instead of $\overline{\sigma}\subseteq Z$, $\sigma\cap Z\neq\emptyset$ 
in place of $\overline{\sigma}\cap Z\neq\emptyset$ and so on. Moreover, if $\sigma$ and $\gamma$ are two paths, the notation $\gamma\subseteq\sigma$ 
means that $\gamma$ is a sub-path of $\sigma$.

As usual, a map $f\colon X\to Y$ between two metric spaces is \textit{compact}
if it is continuous and the closure of $f(X)$ is a compact subset of $Y$.

We now state and prove the main result.

\begin{theorem}\label{ThPapiniZanolinretract}
Let $(X,\|\cdot\|)$, $D$, $A$, $\mathcal{C}$ and $\phi$ be as above.
Suppose that there exists a closed set $W\subseteq D\cap \mathcal{C}$ such that
\begin{enumerate}
\item [$(i)$] $\phi$ is compact on $W$;
\item [$(ii)$] for every path $\sigma\subseteq \mathcal{C}$ with $\sigma\cap \mathcal{C}_{l}\neq\emptyset$ 
and $\sigma\cap \mathcal{C}_{r}\neq\emptyset$ there exists a sub-path $\gamma\subseteq\sigma\cap W$
with $\phi(\gamma)\subseteq \mathcal{C}$, $\phi(\gamma)\cap \mathcal{C}_{l}\neq\emptyset$ and $\phi(\gamma)\cap \mathcal{C}_{r}\neq\emptyset$.
\end{enumerate}
Then there exists $\tilde{z}\in W$ such that $\phi(\tilde{z})=\tilde{z}$.
\end{theorem}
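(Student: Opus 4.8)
The plan is to reduce the problem to a one--parameter fixed point equation on the absolute retract $A$, to produce by the Leray--Schauder continuation principle a connected branch of its solutions joining $\mathcal{C}_{l}$ to $\mathcal{C}_{r}$, and then to extract a genuine fixed point of $\phi$ from that branch by applying hypothesis $(ii)$ to a sequence of paths that shadow the branch. First I would fix a retraction $r\colon X\to A$ (so that $A=r(X)$ is closed and path connected) and the nearest point projection $\pi\colon\mathbb{R}\to[a,b]$, $\pi(t):=\max\{a,\min\{b,t\}\}$, obtaining a retraction $\rho:=(\pi,r)$ of the normed space $\mathbb{R}\times X$ onto $\mathcal{C}$; in particular $\mathcal{C}$ is an absolute retract, hence locally path connected. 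Using $(i)$ and Dugundji's extension theorem I would extend $\phi|_{W}$ to a compact map $\widehat{\phi}=(\widehat{\phi}_{1},\widehat{\phi}_{2})\colon\mathcal{C}\to\mathbb{R}\times X$ with $\widehat{\phi}|_{W}=\phi|_{W}$ (extending into a compact convex set containing $\phi(W)$), and set $h\colon[a,b]\times A\to A$, $h(t,x):=r(\widehat{\phi}_{2}(t,x))$, which is a compact homotopy of compact self--maps of $A$.

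For each $t$ the map $h(t,\cdot)$ is a compact self--map of the absolute retract $A$, so (as $A$ is contractible) its fixed point index on $A$ equals $1$; the Leray--Schauder continuation principle then provides a continuum $\Gamma$ inside the compact set $\Sigma:=\{(t,x)\in[a,b]\times A\colon h(t,x)=x\}$ with $\Gamma\cap\mathcal{C}_{l}\neq\emptyset$ and $\Gamma\cap\mathcal{C}_{r}\neq\emptyset$. Fix $p_{l}\in\Gamma\cap\mathcal{C}_{l}$ and $p_{r}\in\Gamma\cap\mathcal{C}_{r}$. For every $\varepsilon>0$, exploiting that $\Gamma$ is a continuum and $\mathcal{C}$ is locally path connected, I would build a path $\sigma_{\varepsilon}\subseteq\mathcal{C}$ from $p_{l}$ to $p_{r}$ lying in the $\varepsilon$--neighbourhood of $\Gamma$. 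Since $\sigma_{\varepsilon}\cap\mathcal{C}_{l}\neq\emptyset$ and $\sigma_{\varepsilon}\cap\mathcal{C}_{r}\neq\emptyset$, hypothesis $(ii)$ yields a sub--path $\gamma_{\varepsilon}\subseteq\sigma_{\varepsilon}\cap W$ with $\phi(\gamma_{\varepsilon})\subseteq\mathcal{C}$, $\phi(\gamma_{\varepsilon})\cap\mathcal{C}_{l}\neq\emptyset$ and $\phi(\gamma_{\varepsilon})\cap\mathcal{C}_{r}\neq\emptyset$. Parametrising $\gamma_{\varepsilon}$ on a compact interval and considering the continuous scalar function $s\mapsto\phi_{1}(\gamma_{\varepsilon}(s))-(\gamma_{\varepsilon}(s))_{1}$ --- which is $\leq0$ at a parameter where $\phi\circ\gamma_{\varepsilon}$ meets $\mathcal{C}_{l}$ and $\geq0$ where it meets $\mathcal{C}_{r}$, because $\gamma_{\varepsilon}\subseteq\mathcal{C}$ forces its first coordinate into $[a,b]$ --- Bolzano's theorem produces $z_{\varepsilon}\in\gamma_{\varepsilon}\subseteq W$ with $\phi_{1}(z_{\varepsilon})=(z_{\varepsilon})_{1}$; moreover $\phi(z_{\varepsilon})\in\mathcal{C}$, so $\phi_{2}(z_{\varepsilon})\in A$.

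To conclude I would let $\varepsilon\to0$: since $z_{\varepsilon}$ lies within $\varepsilon$ of the compact set $\Gamma$, taking a sequence $\varepsilon_{n}\to0$, a subsequence of $(z_{\varepsilon_{n}})$ converges to some $\widetilde{z}\in\Gamma$, and by closedness of $W$ and of $A$ one has $\widetilde{z}\in W$ and $\phi_{2}(\widetilde{z})\in A$, while by continuity $\phi_{1}(\widetilde{z})=(\widetilde{z})_{1}$. Now $\widetilde{z}\in\Gamma$ gives $r(\widehat{\phi}_{2}(\widetilde{z}))=(\widetilde{z})_{2}$; $\widetilde{z}\in W$ gives $\widehat{\phi}_{2}(\widetilde{z})=\phi_{2}(\widetilde{z})$; and $\phi_{2}(\widetilde{z})\in A$ gives $r(\phi_{2}(\widetilde{z}))=\phi_{2}(\widetilde{z})$; hence $(\widetilde{z})_{2}=\phi_{2}(\widetilde{z})$. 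Therefore $\phi(\widetilde{z})=\widetilde{z}$ with $\widetilde{z}\in W$, which is the assertion.

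The step I expect to be the main obstacle is precisely the passage from the continuum $\Gamma$ to an object on which $(ii)$ can be applied: the continuation principle only returns a \emph{connected} set, whereas $(ii)$ is a statement about \emph{paths}, and a continuum need not be a path. Overcoming this is exactly where the absolute--retract hypothesis on $A$ (in place of the convex ball of \cite{PaZa-2007}) is used, via the local path connectedness of $\mathcal{C}$, and where care is needed so that the three closed conditions built into $(ii)$, together with the crossing information coming from Bolzano's theorem, persist in the limit $\varepsilon\to0$ --- and so that the fixed point produced lands in $W$ with $\phi$--image in $\mathcal{C}$, guaranteeing that it solves $\phi(\widetilde{z})=\widetilde{z}$ for the original $\phi$ rather than for the extension $\widehat{\phi}$ or for $\rho\circ\widehat{\phi}$.
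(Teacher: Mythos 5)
Your proposal is correct and follows essentially the same route as the paper: extend $\phi|_{W}$ to a compact map, compose with a retraction onto $A$, obtain a continuum of solutions of the parametrized problem via the Leray--Schauder continuation principle, approximate that continuum by paths in $\mathcal{C}$ so that hypothesis $(ii)$ and Bolzano's theorem yield points of $W$ solving the first-coordinate equation, and pass to the limit. The only (harmless) deviations are in execution: you compute the index directly on the contractible AR $A$ via normalization instead of on a large ball in $X$ via a homotopy to the zero map, and you produce the approximating path from local path connectedness of the AR $\mathcal{C}$ (path components of open sets being open), where the paper builds it by hand from a finite cover of the continuum by small boxes and then retracts it onto the cylinder with a modulus-of-continuity control on $r$.
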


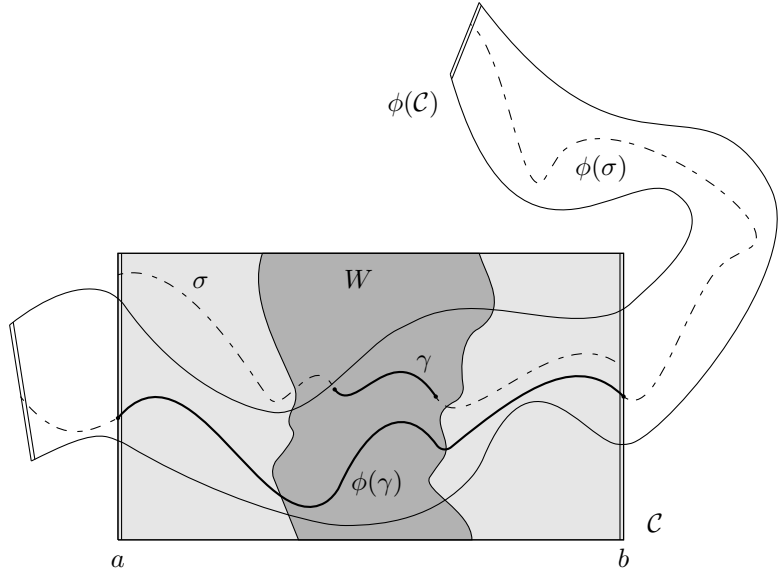
\begin{figure}[ht]
	\centering
\begin{tikzpicture}[x=0.95cm,y=0.95cm]
\filldraw[fill=gray!20] (0,0) -- (7,0) -- (7,4) -- (0,4) -- (0,0);
\draw (0.05,0) -- (0.05,4);
\draw (6.95,0) -- (6.95,4);
\filldraw[fill=gray!60]
											[rounded corners]		(2,4) .. controls (1.8,3.4) and (2.2, 2.7) .. 
															(2.5,2) .. controls (2.2,1.4) and (2.7,1.7) .. 
															(2.1,1) 
											[sharp corners] 		-- (2.5,0) -- (4.9,0)
											[rounded corners]		.. controls (4.9,0.4) and (4.2,0.7) ..
															(4.4,1)
															.. controls (4.7,1.4) and (4.2,1.7) .. 
															(4.8,2) 
															.. controls (4.7,2.7) and (4.9,2.9) ..
															(5.1,3) 
											[sharp corners]			 .. controls (5.3, 3.4) and (5.1,3.7)  .. 
															(5,4) -- (2,4);
\draw [rounded corners] (-1.5,3) .. controls (-1,3.4) and (-0.5, 3.6) .. 
												(0,3.4) .. controls (0.7,2.4) and (1.4, 2) .. 
												(2,1.8) .. controls (2.7,1.7) and (3.4,2.7) .. 
												(4,3) .. controls (5,3.5)  and (6,2.9) .. 
												(7,3.2) .. controls (7.3,3.5) and (8.2,4.2) .. 
												(7.8,4.8) .. controls (7,5.2) and (5.5,3.3) .. 
												(4.6,6.5);
\draw (-1.5,3) -- (-1.2,1.1);
\draw (-1.45,3.04) -- (-1.15,1.12);
\draw [rounded corners] (-1.2,1.1) .. controls (-0.8,1.3) and (-0.4, 1.5) .. 
												(0,1.4) .. controls (1,0.8) and (2,0.4) .. 
												(3,0.2) .. controls (3.7,0.2) and (4.4,0.3) .. 
												(5,1) .. controls (5.7,2.7)  and (6.4,1.5) .. 
												(7,1.3) .. controls (7.7,1.6) and (9.5,3.8) .. 
												(9,5) .. controls (8,6.5) and (7,4.8) .. 
												(5,7.5);
\draw (4.6,6.5) -- (5,7.5);
\draw (4.625,6.44) -- (5.03,7.46);
\draw (0,0) -- (7,0) -- (7,4) -- (0,4) -- (0,0);
\draw[dash pattern=on 2pt off 3pt on 4pt off 4pt] [rounded corners] 
												(0,3.7) .. controls (0.7,3.9) and (1.4,3) .. 
												(2,2.1) .. controls (2.3,1.7) and (2.7,2.7) ..
												(3,2.1);
\draw [line width=0.8pt] [rounded corners]
												(3,2.1) .. controls (3.3,1.7) and (3.8,2.9) ..
												(4.4,2); 
\draw[dash pattern=on 2pt off 3pt on 4pt off 4pt] [rounded corners] 
												(4.4,2) .. controls (4.8,1.3)  and (6.1,3.2) .. 
												(7,2.4);
\draw [dash pattern=on 2pt off 3pt on 4pt off 4pt] [rounded corners]
												(-1.35,2) .. controls (-0.8,1.3) and (-0.4, 1.5) ..
												(0,1.7); 
\draw [line width=0.8pt] [rounded corners](0,1.7) .. controls (1,2.9) and (2,0) .. 
												(3,0.6) .. controls (3.5,1.7) and (4,1.9) .. 
												(4.5,1.2) .. controls (5.7,2.2)  and (6.4,2.6) ..
												(7,2); 
\draw [dash pattern=on 2pt off 3pt on 4pt off 4pt] [rounded corners]
												(7,2) .. controls (7.5,1.6) and (7.9,3.6) .. 
												(8.6,4) .. controls (9.4,4.4) and (6.7,6.3) ..
												(6,5.2) .. controls (5.6,4.5) and (5.4,6.8) .. 
												(4.875,7.2);
\draw (7.2,0.5) node[anchor=north west] {$\mathcal{C}$};	
\draw (3.6,6.4) node[anchor=north west] {$\phi(\mathcal{C})$};	
\draw (3,3.9) node[anchor=north west] {$W$};
\draw (0.9,3.8) node[anchor=north west] {$\sigma$};
\draw (4,2.7) node[anchor=north west] {$\gamma$};
\fill (3,2.1) circle (0.8pt);
\fill (4.4,2) circle (0.8pt);	
\draw (6.2,5.5) node[anchor=north west] {$\phi(\sigma)$};											
\draw (3.1,1.1) node[anchor=north west] {$\phi(\gamma)$};
\fill (0,1.7) circle (0.8pt);
\fill (7,2) circle (0.8pt);	
\draw (0,-0.3) node {$a$};											
\draw (7,-0.25) node {$b$};	
\end{tikzpicture}
\caption{\small{The figure shows a possible situation satisfying conditions $(i)$ and $(ii)$ of Theorem~\ref{ThPapiniZanolinretract}. In the picture it is assumed that $D=\mathcal{C}$.}}
\label{PZ}
\end{figure}

\begin{proof}
Consider the compact operator
\begin{equation*}
\phi\colon W\to \mathbb{R}\times X.
\end{equation*}
As first step we extend $\phi$ to $\mathbb{R}\times X$ via a compact map
\begin{equation*}
\tilde{\phi}\colon\mathbb{R}\times X\to \mathbb{R}\times X,
\end{equation*}
i.e.~$\tilde{\phi}|_{W}=\phi|_{W}$. The existence of $\tilde{\phi}$ compact is guaranteed by an application of Urysohn embedding theorem 
(see, for instance, \cite[Theorem~B.11, p.~597]{GrDu-2003}) 
and Dugundji extension theorem (cf.~\cite{Du-1951} and \cite[Theorem~7.4, p.~163--164]{GrDu-2003}) together with the results in \cite[\textsection~6.2, p.~116--118]{GrDu-2003}.

Let $r\colon X \to A$ be a retraction. We introduce the operator
\begin{equation*}
\psi=(\psi_{1},\psi_{2})\colon \mathbb{R}\times X\to\mathbb{R}\times A, 
\end{equation*}
defined as follows
\begin{equation*}
\psi_{1}(t,x):=\tilde{\phi}_{1}(t,x), \qquad \psi_{2}(t,x):=r(\tilde{\phi}_{2}(t,x)).
\end{equation*}
We observe that $\psi$ is a compact map (by the compactness of $\tilde{\phi}$ and the continuity of $r$). We define
\begin{equation*}
\hat{A}:=\overline{\psi_{2}(\mathbb{R}\times X)}.
\end{equation*}
The set $\hat{A}$ is nonempty, compact and contained in $A$ (since $A$ is closed, by \cite[Theorem~5.1, p.~18]{Hu-1965}).

We also note that if $\tilde{z}=(\tilde{t},\tilde{x})$ is a fixed point of $\psi$ with $\tilde{z}\in W$ and $\phi_{2}(\tilde{z})\in A$, then
\begin{equation*}
\tilde{t}=\psi_{1}(\tilde{z})=\tilde{\phi}_{1}(\tilde{z})=\phi_{1}(\tilde{z})
\end{equation*}
and
\begin{equation*}
\tilde{x}=\psi_{2}(\tilde{z})=r(\tilde{\phi}_{2}(\tilde{z}))=r(\phi_{2}(\tilde{z}))=\phi_{2}(\tilde{z}),
\end{equation*}
hence $\tilde{z}\in W$ is a fixed point of $\phi$.

Now we study the fixed point problem
\begin{equation*}
x=\psi_{2}(t,x), \quad x\in X,
\end{equation*}
where $t$ is treated as a parameter in $\mathopen{[}a,b\mathclose{]}$.
From the definition of $\hat{A}$, it is obvious that $\psi_{2}(t,x)\in \hat{A}\subseteq A$ for all
$(t,x)\in\mathopen{[}a,b\mathclose{]}\times X$. We fix $r>0$ such that the open ball $B(0,r)$ contains the closed and bounded set $\hat{A}$. It
follows that, for all $t\in\mathopen{[}a,b\mathclose{]}$,
\begin{equation*}
x-\psi_{2}(t,x)\neq0, \quad  \forall \, x\in\partial B(0,r).
\end{equation*}
Consequently, the fixed point index $i_{X}(\psi_{2}(t,\cdot),B(0,r))$ is well defined and constant, for $t\in\mathopen{[}a,b\mathclose{]}$.
Consider now the compact homotopy $h\colon\mathopen{[}0,1\mathclose{]}\times B[0,r]\to X$, $h_{\lambda}(x):=\lambda\psi_{2}(t,x)$. 
For every $t\in\mathopen{[}a,b\mathclose{]}$ we have that
\begin{equation*}
x-\lambda\psi_{2}(t,x)\neq0, \quad \forall \, \lambda\in\mathopen{[}0,1\mathclose{]}, \; \forall \, x\in\partial B(0,r),
\end{equation*}
since $\lambda\psi_{2}(t,x)\in\lambda\hat{A}\subseteq B(0,r)$ for every $\lambda\in\mathopen{[}0,1\mathclose{]}$ and $x\in X$.
 
By the homotopy invariance property, the integer $i_{X}(h_{\lambda},B(0,r))$ is constant with respect to $\lambda\in\mathopen{[}0,1\mathclose{]}$.
In particular, for every $t\in\mathopen{[}a,b\mathclose{]}$,
\begin{equation*}
i_{X}(\psi_{2}(t,\cdot),B(0,r))=i_{X}(0,B(0,r))=1.
\end{equation*}

Therefore we can apply the \textit{Leray-Schauder continuation principle} (Theorem~\ref{LScont_prin}), which ensures that the nonempty set
\begin{equation*}
\Sigma:=\bigl{\{}(t,x)\in\mathopen{[}a,b\mathclose{]}\times B(0,r) \colon x=\psi_{2}(t,x)\bigr{\}}
\end{equation*}
contains a compact and connected (\textit{continuum}) set $S$ such that 
\begin{equation*}
S\cap \mathcal{C}_{l}\neq\emptyset \quad \text{ and } \quad S\cap \mathcal{C}_{r}\neq\emptyset.
\end{equation*}
As consequence, $p_{1}(S)=\mathopen{[}a,b\mathclose{]}$, where $p_{1}\colon\mathbb{R}\times X\to \mathbb{R}$, $p_{1}(t,x)=t$, 
and for every open $\tilde{A}$ containing $\hat{A}$
\begin{equation*}
p_{2}(S)\subseteq \hat{A}\subseteq \tilde{A},
\end{equation*}
where $p_{2}\colon\mathbb{R}\times X\to X$, $p_{2}(t,x)=x$.

Fix an open and bounded set $\tilde{A}$ such that $\hat{A}\subseteq \tilde{A}$ (for example $\tilde{A}=B(0,r)$, as before) 
and fix $\varepsilon\in\mathbb{R}$ such that $0<\varepsilon<\text{dist}(\hat{A}, X\setminus \tilde{A})<+\infty$. By the continuity of the retraction $r$,
for every $x\in X$ we can find a positive number $\delta_{x}=\delta_{x}(\varepsilon)$, with $0<\delta_{x}<\varepsilon$, such that for every $y\in X$ with
$\|y-x\|<\delta_{x}$ it holds that $\|r(y)-r(x)\|<\varepsilon$.

Consider the open covering of $S$ defined as
\begin{equation*}
\bigl{\{}\,\mathopen{]}t-\varepsilon,t+\varepsilon\mathclose{[}\times B(x,\delta_{x})\colon (t,x)\in S \bigr{\}}.
\end{equation*}
Since $S$ is compact, we can extract a finite sub-covering
\begin{equation*}
\bigl{\{}\,\mathopen{]}t_{i}-\varepsilon,t_{i}+\varepsilon\mathclose{[}\times B(x_{i},\delta_{x_{i}})\colon i=1,\ldots,m \bigr{\}},
\end{equation*}
where $(t_{i},x_{i})\in S$, so $t_{i}\in\mathopen{[}a,b\mathclose{]}$ and $x_{i}\in \hat{A} \subseteq A$, for all $i\in\{1,\ldots,m\}$.

Now we set
\begin{equation*}
\mathcal{U}_{\varepsilon}
:=\bigcup_{i=1}^{m} \, \mathopen{]}t_{i}-\varepsilon,t_{i}+\varepsilon\mathclose{[}\times B(x_{i},\delta_{x_{i}}),
\end{equation*}
which is contained in $\mathopen{]}a-\varepsilon,b+\varepsilon\mathclose{[}\times \tilde{A}$, because
$0<\delta_{x_{i}}<\varepsilon<\text{dist}(\hat{A}, X\setminus \tilde{A})$, for all $i=1,\ldots,m$.
The set $\mathcal{U}_{\varepsilon}$ is open and connected (because $S\subseteq\mathcal{U}_{\varepsilon}$ 
and $(t_{i},x_{i})\in S\cap (\mathopen{]}t_{i}-\varepsilon,t_{i}+\varepsilon\mathclose{[}\times B(x_{i},\delta_{x_{i}}))\neq\emptyset$ for all
$i\in\{1,\ldots,m\}$). 
Hence $\mathcal{U}_{\varepsilon}$ is also arcwise connected.
Consequently there exists a continuous map
$\vartheta\colon\mathopen{[}0,1\mathclose{]}\to\mathcal{U}_{\varepsilon}$ such that $\vartheta(0)\in \mathcal{C}_{l}$ and $\vartheta(1)\in \mathcal{C}_{r}$.
Passing, if necessary, to a sub-path inside the cylinder and reparameterizing the curve, we can also assume that,
for all $s\in\mathopen{[}0,1\mathclose{]}$,
\begin{equation*}
\vartheta_{1}(s):=p_{1}(\vartheta(s))\in\mathopen{[}a,b\mathclose{]}.
\end{equation*}
Then we define a second curve $\zeta(s):=(\zeta_{1}(s),\zeta_{2}(s))$, $s\in\mathopen{[}0,1\mathclose{]}$, as follows
\begin{equation*}
\zeta_{1}(s):=p_{1}(\vartheta(s))=\vartheta_{1}(s), \qquad \zeta_{2}(s):=r(p_{2}(\vartheta(s)))=r(\vartheta_{2}(s)).
\end{equation*}
We claim that the curve $\zeta$ satisfies
\begin{itemize}
	\item $\zeta(s)\in\mathcal{V}_{\varepsilon}\cap \mathcal{C}$, for all $s\in\mathopen{[}0,1\mathclose{]}$;
	\item $\zeta(0)\in \mathcal{C}_{l}$ and $\zeta(1)\in \mathcal{C}_{r}$;
\end{itemize}
where
\begin{equation*}
\mathcal{V}_{\varepsilon}
:=\bigcup_{i=1}^{m}\,\mathopen{]}t_{i}-\varepsilon,t_{i}+\varepsilon\mathclose{[}\times B(x_{i},\varepsilon).
\end{equation*}
From the definition, we derive immediately that $\zeta(s)\in \mathcal{C}$ 
and $\zeta_{1}(s)=\vartheta_{1}(s)\in\mathopen{[}a,b\mathclose{]}\subseteq p_{1}(\mathcal{V}_{\varepsilon})$, for every $s\in\mathopen{[}0,1\mathclose{]}$.
Consequently, to prove the first property it is sufficient to verify that there exists $j\in\{1,\ldots,m\}$ such that $\zeta_{2}(s)\in
B(x_j,\varepsilon)$. Fix $s\in\mathopen{[}0,1\mathclose{]}$.  Let $k\in\{1,\ldots,m\}$ be such that $\vartheta_{2}(s)\in B(x_k,\delta_{x_k})$.
We distinguish two cases. If $\vartheta_{2}(s)\in A$, then $\zeta_{2}(s)=\vartheta_{2}(s)\in B(x_k,\delta_{x_k})\subseteq B(x_k,\varepsilon)$.  If
$\vartheta_{2}(s)\notin A$, then
\begin{equation*}
\|\zeta_{2}(s)-x_k\| =\|r(\vartheta_{2}(s))-r(x_k)\|<\varepsilon,
\end{equation*}
by the choice of $\delta_{x}=\delta_{x}(\varepsilon)$ and by the fact that $x_k\in A$ (hence $r(x_k)=x_k$). 
In each case we conclude that $\zeta(s)\in\mathcal{V}_{\varepsilon}$ for all $s\in\mathopen{[}0,1\mathclose{]}$.
The second property is obvious from the definition of $\zeta$. Thus our claim is proved.

From the two properties it follows that the path $\zeta$ is contained in the cylinder $\mathcal{C}$ and 
it has a nonempty intersection with the left and the right bases. Thus, taking $\sigma:=\zeta$, hypothesis $(ii)$ implies that there must be a sub-path
$\gamma$ of $\sigma$ such that $\gamma\subseteq W$ with $\phi(\gamma)\subseteq \mathcal{C}$, $\phi(\gamma)\cap \mathcal{C}_{l}\neq\emptyset$ and
$\phi(\gamma)\cap \mathcal{C}_{r}\neq\emptyset$. 
Let
\begin{equation*}
\xi=(\xi_{1},\xi_{2})\colon\mathopen{[}0,1\mathclose{]}\to\mathbb{R}\times X
\end{equation*}
be a continuous map such that the path $(\xi,\overline{\xi})$ is the reparametrization to $\mathopen{[}0,1\mathclose{]}$ 
of the sub-path $(\gamma,\overline{\gamma})$. By the assumptions, it follows that
\begin{itemize}
	\item $\xi(s)\in\mathcal{V}_{\varepsilon}\cap W$, for all $s\in\mathopen{[}0,1\mathclose{]}$;
	\item $\phi(\xi(s))\in \mathcal{C}$, for all $s\in\mathopen{[}0,1\mathclose{]}$;
	\item $\phi(\xi(0))\in \mathcal{C}_{l}$ and $\phi(\xi(1))\in \mathcal{C}_{r}$.
\end{itemize}

We consider now the continuous map $g\colon\mathopen{[}0,1\mathclose{]}\to \mathbb{R}\times X$ defined by $g(s)=\xi_{1}(s)-\phi_{1}(\xi(s))$. 
Since $\xi(s)\in \gamma\subseteq \sigma\subseteq \mathcal{C}$ for all $s\in\mathopen{[}0,1\mathclose{]}$,
we obtain that $\xi_{1}(0)\geq a$, so $g(0)\geq a-a=0$. Similarly $g(1)\leq0$. 
By Bolzano theorem, we derive that there exists $\hat{s}=\hat{s}_{\varepsilon}\in\mathopen{[}0,1\mathclose{]}$ 
such that $g(\hat{s})=\xi_{1}(\hat{s})-\phi_{1}(\xi(\hat{s}))=0$. Setting
\begin{equation*}
\hat{t}=\hat{t}_{\varepsilon}:=\xi_{1}(\hat{s}), \qquad \hat{x}=\hat{x}_{\varepsilon}:=\xi_{2}(\hat{s}), \qquad \hat{z}=\hat{z}_{\varepsilon}:=(\hat{t},\hat{x}),
\end{equation*}
it follows that
\begin{equation*}
\hat{z}=\xi(\hat{s})\in\mathcal{V}_{\varepsilon}\cap W, \qquad \hat{t}=\xi_{1}(\hat{s})=\phi_{1}(\hat{z}), 
\qquad \phi_{2}(\hat{z})=\phi_{2}(\xi(\hat{s}))\in A.
\end{equation*}

From the definition of $\psi$, we obtain that
\begin{equation}\label{eq1}
\hat{z}\in\mathcal{V}_{\varepsilon}\cap W, \qquad \hat{t}=\psi_{1}(\hat{z}), \qquad \phi_{2}(\hat{z})\in A.
\end{equation}
Furthermore, since $\hat{z}\in\mathcal{V}_{\varepsilon}$, there is an index $i\in\{1,\ldots,m\}$ such that $\|\hat{z}-(t_{i},x_{i})\|<\varepsilon$. 
Hence, we conclude that, for every $0 < \varepsilon < \text{dist}(\hat{A}, X\setminus \tilde{A})$, there exists a point $\hat{z}=\hat{z}_{\varepsilon}$ 
satisfying \eqref{eq1} and, moreover, there exists $\hat{y}_{\varepsilon}\in S$ such that $\|\hat{z}_{\varepsilon}-\hat{y}_{\varepsilon}\|<\varepsilon$.

Setting $\varepsilon=\varepsilon_{n}=1/n$, we derive that, for $n>1/\text{dist}(\hat{A}, X\setminus \tilde{A})$, 
there exists an element $\hat{z}_{n}\in\mathcal{V}_{1/n}\cap W$ with the characteristics listed above. 
In addition, for each $\hat{z}_{n}$ there exists a $\hat{y}_{n}\in S$ such that $\|\hat{z}_{n}-\hat{y}_{n}\|<1/n$. 
Since $S$ is compact, possibly passing to a subsequence, the sequence $(\hat{y}_{n})_{n}$ converges to an element
$\tilde{z}=(\tilde{t},\tilde{x})\in S$. 
Passing to the corresponding sub-sequence of $(\hat{z}_{n})_{n}$, we also obtain that $(\hat{z}_{n})_{n}$ converges to $\tilde{z}\in S$.
By the continuity of $\phi$ and $\psi$ and using the fact that $W$ and $A$ are closed, we have
\begin{equation*}
\tilde{z}\in S\cap W, \qquad \tilde{t}=\psi_{1}(\tilde{z}), \qquad \phi_{2}(\tilde{z})\in A.
\end{equation*}
Being $S$ contained in the set $\Sigma$, we find that
\begin{equation*}
\tilde{x}=\psi_{2}(\tilde{t},\tilde{x})
\end{equation*}
and consequently $\tilde{z}=(\tilde{t},\tilde{x})\in W$ is a fixed point of $\psi$. 
As shown at the beginning of the proof and since $\phi_{2}(\tilde{z})\in A$, 
we get that $\tilde{z}\in W$ is a fixed point of $\phi$. The theorem follows.
\end{proof}

\begin{remark}
We observe that in Theorem~\ref{ThPapiniZanolinretract} it is not restrictive to suppose that $\phi(D\cap\mathcal{C})\subseteq \mathbb{R}\times A$. 
In fact, if $\phi(D\cap\mathcal{C})\subseteq \mathbb{R}\times X$, we define 
\begin{equation*}
\hat{D}:=\{z\in D\cap\mathcal{C}\colon\phi_{2}(z)\in A\}\subseteq\mathcal{C}
\end{equation*}
and we consider the restriction $\hat{\phi}:=\phi|_{\hat{D}}\colon\hat{D}\to \mathbb{R}\times A$.
Now we prove that $\hat{\phi}$ satisfies the two conditions of the theorem with respect to a suitable closed set $\hat{W}\subseteq\hat{D}$. Setting
\begin{equation*}
\hat{W}:=\{z\in W\colon\phi(z)\in\mathcal{C}\}\subseteq\hat{D},
\end{equation*}
clearly condition $(i)$ is satisfied (as $\hat{W}\subseteq W$). We prove $(ii)$. 
Suppose to have a path $\sigma\subseteq \mathcal{C}$ with $\sigma\cap \mathcal{C}_{l}\neq\emptyset$ and $\sigma\cap \mathcal{C}_{r}\neq\emptyset$.
We know that there exists a sub-path $\gamma\subseteq\sigma\cap W$ with $\phi(\gamma)\subseteq \mathcal{C}$, 
$\phi(\gamma)\cap \mathcal{C}_{l}\neq\emptyset$ and $\phi(\gamma)\cap \mathcal{C}_{r}\neq\emptyset$. Obviously $\gamma\subseteq\hat{W}$. Hence $(ii)$ follows.
\end{remark}

\begin{remark}
We note that in condition $(ii)$ of Theorem~\ref{ThPapiniZanolinretract} we can replace the term \textit{path} 
with the concept of \textit{continuum} (i.e.~compact and connected set). 
Following the proof of \cite[Theorem~7]{PaZa-2007}, we obtain the same thesis in the case of retracts.
\end{remark}

Our next aim is to give an existence result with respect to a cylinder having an absolute retract as a ``base''. For reader's convenience we recall
that a space $A$ is an \textit{absolute retract} (or simply an \textit{AR}) if $A$ is metrizable and for any metrizable $Y$ and every closed
$M\subseteq Y$ each continuous function $f\colon M\to A$ is continuously extendable over $Y$. We also recall that a space $A$ is an \textit{absolute
neighbourhood retract} (or simply an \textit{ANR}) if $A$ is metrizable and for any metrizable $Y$, every closed $M\subseteq Y$ and each continuous
function $f\colon M\to A$ there exists a neighbourhood $U$ of $M$ and a continuous extension of $f$ over $U$. Clearly an AR is an ANR.
For properties and characterization of ARs and ANRs, we refer to \cite{GrDu-2003,Hu-1965}.

The next corollary deals with the case of a cylinder with base an AR.

\begin{corollary}
\label{ThPapiniZanolinAR}
Let $A$ be an AR and $a,b\in\mathbb{R}$ with $a<b$. We denote by
\begin{equation*}
\mathcal{C}:=\mathopen{[}a,b\mathclose{]}\times A
\end{equation*}
the cylinder with ``base'' the absolute retract $A$ and ``height'' the interval $\mathopen{[}a,b\mathclose{]}$.
We indicate with
\begin{equation*}
\mathcal{C}_{l}:=\{a\}\times A \quad \text{ and } \quad \mathcal{C}_{r}:=\{b\}\times A
\end{equation*}
the cylinder's left base and the cylinder's right base respectively.

Let $D\subseteq\mathcal{C}$. Consider the operator
\begin{equation*}
\phi=(\phi_{1},\phi_{2})\colon D\to\mathbb{R}\times A
\end{equation*}
and suppose that there exists a closed set $W\subseteq D$ such that
\begin{enumerate}
\item [$(i)$] $\phi$ is compact on $W$;
\item [$(ii)$] for every path $\sigma\subseteq \mathcal{C}$ with $\sigma\cap \mathcal{C}_{l}\neq\emptyset$ 
and $\sigma\cap \mathcal{C}_{r}\neq\emptyset$ there exists a sub-path $\gamma\subseteq\sigma\cap W$
with $\phi(\gamma)\subseteq \mathcal{C}$, $\phi(\gamma)\cap \mathcal{C}_{l}\neq\emptyset$ and $\phi(\gamma)\cap \mathcal{C}_{r}\neq\emptyset$.
\end{enumerate}
Then there exists $\tilde{z}\in W$ such that $\phi(\tilde{z})=\tilde{z}$.
\end{corollary}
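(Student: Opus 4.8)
The plan is to derive the corollary from Theorem~\ref{ThPapiniZanolinretract} by realizing the abstract absolute retract $A$ as an honest retract of a normed linear space, and then checking that all the hypotheses transport unchanged.

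First I would embed $A$. Since $A$ is an AR it is, in particular, metrizable; fixing a compatible metric, it is classical (see \cite{GrDu-2003,Hu-1965}) that $A$ is homeomorphic to a \emph{closed} subset of some normed linear space $X$. Identifying $A$ with this closed copy, I may assume $A\subseteq X$ with $A$ closed in $X$. Now I apply the very definition of AR recalled above, with $Y=X$, $M=A$ (closed in the metrizable space $X$) and $f=Id_{A}$: the map $Id_{A}\colon A\to A$ extends to a continuous $r\colon X\to A$, and $r|_{A}=Id_{A}$, so $r$ is a retraction. Thus $A$ is a retract of the normed linear space $X$, exactly the setting of Theorem~\ref{ThPapiniZanolinretract}.

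Next I would check that the data of the corollary are a special case of the data of Theorem~\ref{ThPapiniZanolinretract}. With the identification above, $\mathcal{C}=\mathopen{[}a,b\mathclose{]}\times A\subseteq\mathbb{R}\times X$ is the cylinder over the retract $A$, with left and right bases $\mathcal{C}_{l}$, $\mathcal{C}_{r}$ as in that theorem. From $D\subseteq\mathcal{C}$ we get $D\cap\mathcal{C}=D$, so the closed set $W\subseteq D$ satisfies $W\subseteq D\cap\mathcal{C}$, and $W$ is closed in $\mathbb{R}\times X$ because $\mathcal{C}$ is (being the product of the closed sets $\mathopen{[}a,b\mathclose{]}$ and $A$). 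The map $\phi\colon D\to\mathbb{R}\times A\subseteq\mathbb{R}\times X$ has the required form, and hypothesis $(i)$ is literally hypothesis $(i)$ of the theorem. For hypothesis $(ii)$, I would note that, $A$ now being a subspace of the normed space $X$, a continuous curve into $\mathcal{C}$ is precisely a path (in the sense of Section~\ref{section2}) with support contained in $\mathcal{C}$, and that this correspondence preserves supports, sub-paths, and intersections with $\mathcal{C}_{l}$ and $\mathcal{C}_{r}$; hence condition $(ii)$ of the corollary is word for word condition $(ii)$ of Theorem~\ref{ThPapiniZanolinretract}.

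Finally, Theorem~\ref{ThPapiniZanolinretract} would yield $\tilde{z}\in W$ with $\phi(\tilde{z})=\tilde{z}$, which is the assertion (the identification of $A$ with its copy in $X$ being the identity on $W$). The only genuinely non-routine ingredient is the embedding in the first step --- that an AR can be placed as a closed subset, hence by its defining property as a retract, of a normed linear space; everything after that is bookkeeping. A minor point to keep an eye on is that ``closedness of $W$'' and the notion of ``path'' used in Theorem~\ref{ThPapiniZanolinretract} really do survive the identification, which is the case precisely because $A$ embeds as a \emph{closed} subspace of $X$.
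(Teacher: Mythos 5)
Your proof is correct and follows essentially the same route as the paper: the paper's own (one-line) argument simply invokes the characterization of ARs as retracts of normed linear spaces (citing \cite[Theorem~7.6, p.~164]{GrDu-2003}) and then applies Theorem~\ref{ThPapiniZanolinretract}, which is exactly what you do, with the added benefit of spelling out the closed embedding plus the extension of $Id_{A}$ that yields the retraction, and the (correct) bookkeeping that $W$ remains closed and condition $(ii)$ transfers verbatim.
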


\begin{proof}
Given the absolute retract $A$, we can fix a normed linear space $X$ where $A$ is a retract (cf.~\cite[Theorem~7.6, p.~164]{GrDu-2003}).
Finally we use Theorem~\ref{ThPapiniZanolinretract} to conclude.
\end{proof}

The following example shows that completely continuity is not sufficient for the claim of Theorem~\ref{ThPapiniZanolinretract}. We recall that a map 
$f\colon X\to Y$ between normed linear spaces is \textit{completely continuous}, if $f$ is continuous 
and the closure of $f(B)$ is a compact subset of $Y$, for each bounded subset $B\subseteq X$.

\begin{example}
Consider the map $\phi\colon\mathopen{[}0,1\mathclose{]}\times \mathbb{R}\to\mathopen{[}0,1\mathclose{]}\times \mathbb{R}$, $\phi(t,x)=(t,x+1)$, 
and set $W=\mathopen{[}0,1\mathclose{]}\times\mathbb{R}$. Clearly $\mathbb{R}$ is a retract (of $\mathbb{R}$) and $W$ is closed. 
We note that the map $\phi$ is completely continuous, but it is not compact on $W$, since $\phi(W)=\mathopen{[}0,1\mathclose{]}\times\mathbb{R}$.
Moreover, condition $(ii)$ is satisfied, but $\phi$ has not fixed points.
\end{example}

With the following example we point out that, if we have an ANR at the place of the AR, the assertion under the same hypotheses is not true.

\begin{example}
Let $S^{n-1}:=\{x\in\mathbb{R}^{n}\colon\|x\|=1\}$ be the unit sphere in $\mathbb{R}^{n}$. 
It is known that $S^{n-1}$ is an ANR but not an AR (see \cite[Theorem~1.5, p.~280]{GrDu-2003} and \cite[p.~20--21]{Hu-1965}).
Fix $\alpha\in\mathopen{]}0,2\pi\mathclose{[}$ ($\alpha/2\pi\notin\mathbb{Q}$) and consider the map 
$\phi\colon\mathopen{[}0,1\mathclose{]}\times S^{1}\to\mathopen{[}0,1\mathclose{]}\times S^{1}$, 
$\phi(t,x)=(t,xe^{i\alpha})$, which represent the rotation around zero of angle $\alpha$. 
Clearly $\phi$ is continuous (hence compact), but it has not fixed points (or periodic points).
\end{example}

In almost all the applications to differential equations we can not work with a cylinder. 
Thus, our new goal is to present an existence result for fixed points of maps defined in more general domains. 

We recall that a topological space $X$ satisfies the \textit{fixed point property} if any continuous function $f\colon X\to X$ has a fixed point.
From classical results, we know that an homeomorphism preserves the fixed point property. In the following result we show that also our property is
preserved under homeomorphisms.

\begin{corollary}
\label{ThPapiniZanolinhomeomorphism}
Let $Y$ be a metric space and $M\subseteq Y$ a closed subset homeomorphic to the cylinder $\mathcal{C}:=\mathopen{[}a,b\mathclose{]}\times A$, 
where $A$ is an AR and $a,b\in\mathbb{R}$ with $a<b$. Call
\begin{equation*}
h\colon M\to\mathopen{[}a,b\mathclose{]}\times A
\end{equation*}
the homeomorphism onto its image and
\begin{equation*}
M_{l}:=h^{-1}(\{a\}\times A) \quad \text{ and } \quad M_{r}:=h^{-1}(\{b\}\times A)
\end{equation*}
the ``left base'' and the ``right base'' of $M$ respectively.

Let $N\subseteq Y$. Consider the operator $\phi\colon N\to Y$ and suppose that there exists a closed set $V\subseteq M\cap N$ such that
\begin{enumerate}
	\item [$(i)$] $\phi$ is compact on $V$;
	\item [$(ii)$] for every path $\sigma\subseteq M$ with $\sigma\cap M_{l}\neq\emptyset$ and $\sigma\cap M_{r}\neq\emptyset$ 
there exists a sub-path $\gamma\subseteq\sigma\cap V$ with $\phi(\gamma)\subseteq M$, 
$\phi(\gamma)\cap M_{l}\neq\emptyset$ and $\phi(\gamma)\cap M_{r}\neq\emptyset$.
\end{enumerate}
Then there exists $\tilde{z}\in V$ such that $\phi(\tilde{z})=\tilde{z}$.
\end{corollary}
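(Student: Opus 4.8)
The plan is to transport the whole situation back to the cylinder $\mathcal{C}=[a,b]\times A$ by means of the homeomorphism $h$, apply Corollary~\ref{ThPapiniZanolinAR}, and then push the resulting fixed point forward again. The only point that genuinely requires care is that $\phi$ is merely assumed to take values in $Y$, so that a priori $\phi(V)\not\subseteq M$ and the composition $h\circ\phi\circ h^{-1}$ need not be defined on all of $h(V)$ — hypothesis $(ii)$ controls $\phi$ only along the relevant sub-paths. Accordingly, I would first pass to
\[
V':=\{z\in V\colon \phi(z)\in M\}=V\cap\phi^{-1}(M),
\]
which is closed in $Y$ (the preimage of the closed set $M$ under the continuous map $\phi|_{V}$, intersected with the closed set $V$); moreover $\phi$ is still compact on $V'$, and since $M$ is closed one has $\overline{\phi(V')}\subseteq M$, so that $h$ may legitimately be applied to $\overline{\phi(V')}$.

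Next I would put $W:=h(V')$ and take this as both the domain $D$ and the closed set $W$ in Corollary~\ref{ThPapiniZanolinAR}, defining
\[
\psi:=h\circ\phi\circ h^{-1}\colon W\to\mathbb{R}\times A .
\]
This is well posed: for $w\in W$ we have $h^{-1}(w)\in V'\subseteq N$, hence $\phi(h^{-1}(w))\in M$, hence $\psi(w)=h(\phi(h^{-1}(w)))\in h(M)=\mathcal{C}\subseteq\mathbb{R}\times A$; and $W=h(V')$ is closed because $h\colon M\to\mathcal{C}$ is a homeomorphism and $V'$ is closed. Condition $(i)$ for $\psi$ follows by writing $\psi$ as the composition of the continuous maps $h^{-1}|_{W}$, $\phi|_{V'}$, $h|_{\overline{\phi(V')}}$, together with the inclusion $\overline{\psi(W)}\subseteq h(\overline{\phi(V')})$, the right-hand side being a continuous image of a compact set, hence compact.

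To check condition $(ii)$ for $\psi$, given a path $\sigma\subseteq\mathcal{C}$ with $\sigma\cap\mathcal{C}_{l}\neq\emptyset$ and $\sigma\cap\mathcal{C}_{r}\neq\emptyset$, I would consider $\tau:=h^{-1}\circ\sigma$, a path in $M$; since $M_{l}=h^{-1}(\mathcal{C}_{l})$ and $M_{r}=h^{-1}(\mathcal{C}_{r})$, it meets $M_{l}$ and $M_{r}$. Hypothesis $(ii)$ of the statement then produces a sub-path $\gamma\subseteq\tau\cap V$ with $\phi(\gamma)\subseteq M$, $\phi(\gamma)\cap M_{l}\neq\emptyset$, $\phi(\gamma)\cap M_{r}\neq\emptyset$. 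Since $\gamma\subseteq V$ and $\phi(\gamma)\subseteq M$, in fact $\gamma\subseteq V'$, so $h\circ\gamma$ is a sub-path of $\sigma$ with support in $W$; and from $\psi\circ(h\circ\gamma)=h\circ\phi\circ\gamma$ together with $h(M_{l})=\mathcal{C}_{l}$ and $h(M_{r})=\mathcal{C}_{r}$ one reads off $\psi(h\circ\gamma)\subseteq\mathcal{C}$, $\psi(h\circ\gamma)\cap\mathcal{C}_{l}\neq\emptyset$, $\psi(h\circ\gamma)\cap\mathcal{C}_{r}\neq\emptyset$, which is exactly what $(ii)$ requires of $\psi$.

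Applying Corollary~\ref{ThPapiniZanolinAR} now gives $\tilde{w}\in W$ with $\psi(\tilde{w})=\tilde{w}$, i.e.~$h(\phi(h^{-1}(\tilde w)))=\tilde w$; setting $\tilde{z}:=h^{-1}(\tilde w)\in V'\subseteq V$ and using the injectivity of $h$ yields $\phi(\tilde z)=\tilde z$, which proves the claim. I expect no real difficulty beyond the one already flagged: the reduction to $V'$ and the small observation that the sub-path supplied by $(ii)$ is automatically contained in $V'$ (because its image under $\phi$ lies in $M$); once this is in place, everything reduces to routine transport of paths, closed sets and compact maps along $h$.
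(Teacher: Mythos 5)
Your proof is correct and follows essentially the same route as the paper's: your set $V'$ is exactly the paper's $H=\{z\in V\colon\phi(z)\in M\}$, your $W=h(V')$ is its $D=h(H)$, and the conjugated map $\psi=h\circ\phi\circ h^{-1}$ together with the pull-back/push-forward of paths matches the paper's argument step for step (the only cosmetic difference being that you verify compactness via composition of continuous maps on a compact set rather than by the paper's sequential argument).
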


\begin{proof}
Define
\begin{equation*}
H:=\{z\in V\colon\phi(z)\in M\}
\end{equation*}
and observe that $H=\phi^{-1}(M)\cap V$ is closed (since $M$ and $V$ are closed), $\hat{\phi}:=\phi|_{H}\colon H\to Y$ 
is compact and $\phi(H)\subseteq M$. Set $D:=h(H)$, which is closed and contained in $h(M)=\mathcal{C}$.

Consider this diagram
\begin{displaymath}
\xymatrix{
&H(\subseteq V)
\ar[rr]^{\mathlarger{\hat{\phi}}}
&&M
\ar[d]^{\mathlarger{\, h}}
\\
&h(H)=D
\ar@{-->}[rr]_{\mathlarger{\hat{\psi}}}
\ar[u]^{\mathlarger{h^{-1}}}
&&h(M)=\mathcal{C}
}
\end{displaymath}
where
\begin{equation*}
\hat{\psi}:= h\circ\hat{\phi}\circ h^{-1}\colon D\to\mathcal{C}=\mathopen{[}a,b\mathclose{]}\times A.
\end{equation*}
We claim that $\hat{\psi}$ is a compact map. Suppose to have a sequence $(y_{n})_{n}$ contained in $\hat{\psi}(D)$, 
i.e.~$y_{n}=\hat{\psi}(x_{n})=h(\hat{\phi}(h^{-1}(x_{n})))$ with $x_{n}\in D$. By the compactness of $\hat{\phi}$, 
the closure of $\hat{\phi}(h^{-1}(D))=\hat{\phi}(H)$ is compact. Then there exists a subsequence $(x_{k_{n}})$ 
such that $\hat{\phi}(h^{-1}(x_{k_{n}}))\to \bar{x}\in M$ (since $M$ closed). 
By the continuity of $h$, we conclude that $y_{k_{n}}=\hat{\psi}(x_{k_{n}})=h(\hat{\phi}(h^{-1}(x_{k_{n}})))\to h(\bar{x})\in h(M)=\mathcal{C}$. 
Hence the compactness of $\hat{\psi}$.

Consider a path $\hat{\sigma}\subseteq \mathcal{C}$ with $\hat{\sigma}\cap \mathcal{C}_{l}\neq\emptyset$ 
and $\hat{\sigma}\cap \mathcal{C}_{r}\neq\emptyset$. Consequently $\sigma:=h^{-1}(\hat{\sigma})\subseteq M$, $\sigma\cap M_{l}\neq\emptyset$ 
and $\sigma\cap M_{r}\neq\emptyset$. Then there exists a sub-path $\gamma\subseteq\sigma\cap V$ with $\phi(\gamma)\subseteq M$, 
$\phi(\gamma)\cap M_{l}\neq\emptyset$ and $\phi(\gamma)\cap M_{r}\neq\emptyset$. Hence $\hat{\gamma}:=h(\gamma)$ is a sub-path of $\hat{\sigma}$
such that $\hat{\gamma}\subseteq\hat{\sigma} \cap D$, $\hat{\psi}(\hat{\gamma})\subseteq \mathcal{C}$, $\hat{\psi}(\hat{\gamma})\cap \mathcal{C}_{l}\neq\emptyset$ 
and $\hat{\psi}(\hat{\gamma})\cap \mathcal{C}_{r}\neq\emptyset$.

Setting $W=D$, every hypothesis of Corollary~\ref{ThPapiniZanolinAR} is satisfied. Then there exists $\bar{z}\in D=h(H)$ 
such that $\hat{\psi}(\bar{z})=\bar{z}$. We conclude that the point $\tilde{z}=h^{-1}(\bar{z})\in V$ is a fixed point of $\phi$.
\end{proof}

\section{Some classical theorems}\label{section3}

In this section we present three classical results of fixed point theory revised in view of Theorem~\ref{ThPapiniZanolinretract}. 
In other words, we check that the main theorem is a generalization of these results.

Certainly \textit{Brouwer fixed point theorem} is one of the most famous. In the proposed proof we note that
 a simple trick allows us to put ourselves in the hypotheses $(i)$ and $(ii)$ of Theorem~\ref{ThPapiniZanolinretract}.

\begin{theorem}[Brouwer fixed point theorem]
\label{Brouwer fixed point theorem}
Let $B=B[0,1]\subseteq \mathbb{R}^{n}$ be the unit closed ball. Any continuous function $f\colon B\to B$ has a fixed point in $B$.
\end{theorem}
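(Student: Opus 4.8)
The plan is to deduce Brouwer's theorem from Theorem~\ref{ThPapiniZanolinretract} by cooking up an auxiliary map on a cylinder whose fixed points force a fixed point of $f$. First I would set up the geometry: take $A := B[0,1] \subseteq \mathbb{R}^{n}$, which is an absolute retract (indeed it is convex and closed, hence a retract of $\mathbb{R}^{n}$ via the nearest-point projection), and take the trivial interval, say $\mathopen{[}a,b\mathclose{]} = \mathopen{[}0,1\mathclose{]}$, so that the cylinder is $\mathcal{C} = \mathopen{[}0,1\mathclose{]}\times B$. The key trick is to define $\phi = (\phi_{1},\phi_{2})\colon \mathcal{C}\to \mathbb{R}\times B$ by
\begin{equation*}
\phi_{1}(t,x) := t, \qquad \phi_{2}(t,x) := f(x),
\end{equation*}
so that $\phi$ leaves the ``height'' coordinate untouched and acts by $f$ on the ball. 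Then I would take $W := \mathcal{C}$ (closed) and $D := \mathcal{C}$; condition $(i)$ is immediate since $\phi$ is continuous and $\mathcal{C}$ is compact, so $\phi$ is compact on $W$.

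The substance is verifying condition $(ii)$. Let $\sigma\colon \mathopen{[}0,1\mathclose{]}\to \mathcal{C}$ be a path with $\sigma\cap \mathcal{C}_{l}\neq\emptyset$ and $\sigma\cap \mathcal{C}_{r}\neq\emptyset$; write $\sigma(s) = (\sigma_{1}(s),\sigma_{2}(s))$. Here I would simply take $\gamma := \sigma$ itself (a sub-path of itself contained in $\sigma\cap W = \sigma$). Since $\phi_{2}(\sigma(s)) = f(\sigma_{2}(s))\in B$ and $\phi_{1}(\sigma(s)) = \sigma_{1}(s)\in\mathopen{[}0,1\mathclose{]}$, we get $\phi(\gamma)\subseteq \mathcal{C}$. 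Moreover, because $\phi$ preserves the first coordinate, if $s_{l}$ and $s_{r}$ are parameters with $\sigma_{1}(s_{l}) = 0$ and $\sigma_{1}(s_{r}) = 1$, then $\phi(\sigma(s_{l}))\in \mathcal{C}_{l}$ and $\phi(\sigma(s_{r}))\in \mathcal{C}_{r}$, so $\phi(\gamma)\cap \mathcal{C}_{l}\neq\emptyset$ and $\phi(\gamma)\cap \mathcal{C}_{r}\neq\emptyset$. Hence $(ii)$ holds.

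Theorem~\ref{ThPapiniZanolinretract} (or equivalently Corollary~\ref{ThPapiniZanolinAR}, since $B$ is an AR) then yields a point $\tilde{z} = (\tilde{t},\tilde{x})\in W$ with $\phi(\tilde{z}) = \tilde{z}$, which means $\tilde{t} = \phi_{1}(\tilde{t},\tilde{x}) = \tilde{t}$ (automatically true) and $\tilde{x} = \phi_{2}(\tilde{t},\tilde{x}) = f(\tilde{x})$. Thus $\tilde{x}\in B$ is a fixed point of $f$, proving the theorem. I do not expect any serious obstacle here: the only point requiring a moment's care is that the ``stretching along paths'' hypothesis $(ii)$ is trivially met because the auxiliary map is the identity in the height direction, so every connecting path is its own witnessing sub-path; this is precisely the ``simple trick'' advertised before the statement. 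One should also note, as a sanity check, that the argument does not secretly use Brouwer's theorem, since Theorem~\ref{ThPapiniZanolinretract} was established via the fixed point index and the Leray--Schauder continuation principle.
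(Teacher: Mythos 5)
Your proposal is correct and follows exactly the paper's own argument: the same auxiliary map $\phi(t,x)=(t,f(x))$ on the cylinder $\mathopen{[}0,1\mathclose{]}\times B$ with $W=\mathcal{C}$, with condition $(ii)$ holding trivially because $\phi$ preserves the height coordinate. You simply spell out the verification of $(i)$ and $(ii)$ in more detail than the paper does.
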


\begin{proof}
First of all we observe that $B$ is a retract of $\mathbb{R}^{n}$ and $f$ is compact (by the finite dimension of the space). 
Consider the application $\phi=(\phi_{1},\phi_{2})\colon\mathopen{[}0,1\mathclose{]}\times B\to\mathopen{[}0,1\mathclose{]}\times B$ defined as
\begin{equation*}
\phi_{1}(t,x):=t, \qquad \phi_{2}(t,x):=f(x).
\end{equation*}
By the hypothesis, $\phi$ is well defined and satisfies conditions $(i)$ and $(ii)$ of Theorem~\ref{ThPapiniZanolinretract} (with
$W=\mathopen{[}0,1\mathclose{]}\times B$). Then there exists $\tilde{z}=(\tilde{t},\tilde{x})\in \mathopen{[}0,1\mathclose{]}\times B$ such that
$\phi(\tilde{z})=\tilde{z}$, i.e.~$(\tilde{t},\tilde{x})=(\tilde{t},f(\tilde{x}))$. We conclude that $\tilde{x}\in B$ is a fixed point of $f$.
\end{proof}

The second classical result we look over is \textit{Schauder fixed point theorem}.

\begin{theorem}[Schauder fixed point theorem]
\label{Schauder fixed point theorem}
Let $C$ be a closed, convex and nonempty subset of a normed linear space $X$. Any compact map $\varphi\colon C\to C$ has a fixed point in $C$.
\end{theorem}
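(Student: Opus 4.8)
The plan is to mimic the proof of Brouwer's theorem given above, replacing the closed ball by the convex set $C$ and invoking the absolute-retract version of the main result, Corollary~\ref{ThPapiniZanolinAR}. The first step is to observe that a nonempty, closed and convex subset $C$ of a normed linear space is an AR; this is a consequence of Dugundji's extension theorem (see \cite{GrDu-2003} and the discussion preceding Corollary~\ref{ThPapiniZanolinAR}). Thus we may legitimately form the cylinder $\mathcal{C}:=\mathopen{[}0,1\mathclose{]}\times C$, with bases $\mathcal{C}_{l}:=\{0\}\times C$ and $\mathcal{C}_{r}:=\{1\}\times C$.

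Next I would introduce the auxiliary map $\phi=(\phi_{1},\phi_{2})\colon\mathcal{C}\to\mathopen{[}0,1\mathclose{]}\times C$ given by $\phi_{1}(t,x):=t$ and $\phi_{2}(t,x):=\varphi(x)$, which is well defined precisely because $\varphi$ maps $C$ into $C$, and take $W:=\mathcal{C}$ (so $D=W=\mathcal{C}$). Condition $(i)$ of Corollary~\ref{ThPapiniZanolinAR} holds since $\varphi$ is compact and $\mathopen{[}0,1\mathclose{]}$ is compact, hence $\phi$ is compact on $W$.

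For condition $(ii)$, let $\sigma\subseteq\mathcal{C}$ be any path meeting both $\mathcal{C}_{l}$ and $\mathcal{C}_{r}$, and simply take the sub-path $\gamma:=\sigma$. Then $\gamma\subseteq\sigma\cap W$ trivially, and $\phi(\gamma)\subseteq\mathopen{[}0,1\mathclose{]}\times\varphi(C)\subseteq\mathcal{C}$. Since $\sigma$ is connected and its first coordinate takes both the values $0$ and $1$, the projection $p_{1}(\sigma)$ equals $\mathopen{[}0,1\mathclose{]}$; as $p_{1}\circ\phi=p_{1}$ on $\mathcal{C}$, the set $\phi(\gamma)$ contains points with first coordinate $0$ and with first coordinate $1$, i.e.\ $\phi(\gamma)\cap\mathcal{C}_{l}\neq\emptyset$ and $\phi(\gamma)\cap\mathcal{C}_{r}\neq\emptyset$. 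Hence $(ii)$ is satisfied.

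Applying Corollary~\ref{ThPapiniZanolinAR} then yields $\tilde{z}=(\tilde{t},\tilde{x})\in W$ with $\phi(\tilde{z})=\tilde{z}$; the first component gives $\tilde{t}=\tilde{t}$ and the second gives $\tilde{x}=\varphi(\tilde{x})$, so $\tilde{x}\in C$ is the desired fixed point. The only point requiring care --- and the step I expect to be the main obstacle --- is the fact that a nonempty closed convex subset of a normed space is an AR; once this is granted, the argument is essentially a copy of the Brouwer proof, with the extra and almost trivial verification that the ``stretching along the paths'' condition $(ii)$ reduces here to the intermediate value theorem applied to the first coordinate.
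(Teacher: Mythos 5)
Your proposal is correct and follows essentially the same route as the paper: the paper proves a ``Generalized Schauder theorem'' for an arbitrary AR $Z$ using exactly your auxiliary map $\phi(t,x)=(t,\varphi(x))$ on the cylinder $\mathopen{[}0,1\mathclose{]}\times Z$ with $W$ equal to the whole cylinder, and Schauder's theorem is then the special case $Z=C$, justified (as you do) by the fact that a nonempty closed convex subset of a normed linear space is an AR via Dugundji's extension theorem. Your verification of condition $(ii)$ by taking $\gamma=\sigma$ and using $p_{1}\circ\phi=p_{1}$ matches the paper's argument.
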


Now we prove directly a more general theorem, of which one can find an alternative proof in \cite[Theorem~7.9, p.~165]{GrDu-2003}.

\begin{theorem}[Generalized Schauder theorem]
\label{generalized Schauder fixed point theorem}
Let $Z$ be an AR. Any compact map $\varphi\colon Z\to Z$ has a fixed point in $Z$.
\end{theorem}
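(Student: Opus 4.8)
The plan is to deduce the Generalized Schauder theorem directly from the main result, Theorem~\ref{ThPapiniZanolinretract}, by the same ``add a dummy interval'' trick used for Brouwer's theorem. Given the AR $Z$, first fix a normed linear space $X$ in which $Z$ embeds as a retract (via \cite[Theorem~7.6, p.~164]{GrDu-2003}), so that $A := Z$ plays the role of the ``base'' and $\mathcal{C} := \mathopen{[}0,1\mathclose{]}\times Z$ is the relevant cylinder. Then define $\phi = (\phi_{1},\phi_{2})\colon \mathcal{C}\to \mathopen{[}0,1\mathclose{]}\times Z$ by $\phi_{1}(t,x) := t$ and $\phi_{2}(t,x) := \varphi(x)$, and take $W := \mathcal{C}$, which is closed in $D = \mathcal{C}$.

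Next I would verify the two hypotheses. Condition $(i)$ holds because $\varphi$ is compact by assumption and $\phi_{1}$ is the (continuous) projection onto a compact interval, so $\overline{\phi(W)} \subseteq \mathopen{[}0,1\mathclose{]}\times\overline{\varphi(Z)}$ is compact. For condition $(ii)$, let $\sigma\colon\mathopen{[}0,1\mathclose{]}\to\mathcal{C}$ be any path with $\sigma\cap\mathcal{C}_{l}\neq\emptyset$ and $\sigma\cap\mathcal{C}_{r}\neq\emptyset$; the simplest choice is to take $\gamma := \sigma$ itself (so $\gamma\subseteq\sigma\cap W$ trivially). Then $\phi(\gamma) = \{(t,\varphi(x)) : (t,x)\in\overline{\sigma}\}\subseteq\mathopen{[}0,1\mathclose{]}\times Z = \mathcal{C}$; moreover, since $\phi$ leaves the first coordinate unchanged, $\sigma\cap\mathcal{C}_{l}\neq\emptyset$ gives a point of $\overline{\sigma}$ with first coordinate $0$, whose image under $\phi$ again has first coordinate $0$, hence lies in $\mathcal{C}_{l}$; similarly $\phi(\gamma)\cap\mathcal{C}_{r}\neq\emptyset$. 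Thus $(ii)$ is satisfied.

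Applying Theorem~\ref{ThPapiniZanolinretract} yields a fixed point $\tilde{z} = (\tilde{t},\tilde{x})\in W$ with $\phi(\tilde{z}) = \tilde{z}$, i.e.\ $(\tilde{t},\tilde{x}) = (\tilde{t},\varphi(\tilde{x}))$, so $\tilde{x}\in Z$ satisfies $\varphi(\tilde{x}) = \tilde{x}$. One could equally well invoke Corollary~\ref{ThPapiniZanolinAR} here and skip the explicit embedding of $Z$, which is slightly cleaner.

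I do not expect any genuine obstacle: the whole difficulty of the statement has been absorbed into Theorem~\ref{ThPapiniZanolinretract}, and the ``trick'' is merely to recognize that a self-map of an AR becomes a self-map of a cylinder that trivially stretches along paths once we adjoin an inert interval factor. The only point requiring a word of care is the compactness of $\phi$ on $W$ when $Z$ (hence $\mathcal{C}$) is noncompact — this is exactly where the hypothesis that $\varphi$ is a \emph{compact} map, rather than merely continuous, is used, and it is handled by the product-of-closures observation above. The classical Schauder theorem (Theorem~\ref{Schauder fixed point theorem}) then follows immediately, since every closed convex nonempty subset of a normed linear space is an AR by Dugundji's extension theorem.
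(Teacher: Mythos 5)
Your proposal is correct and follows essentially the same route as the paper: the paper also adjoins the inert factor $\mathopen{[}0,1\mathclose{]}$, sets $\phi(t,x)=(t,\varphi(x))$ with $W=\mathopen{[}0,1\mathclose{]}\times Z$, and invokes Corollary~\ref{ThPapiniZanolinAR} (whose own proof consists precisely of the embedding of $Z$ as a retract of a normed space followed by Theorem~\ref{ThPapiniZanolinretract}). The only difference is that you spell out the verification of conditions $(i)$ and $(ii)$, which the paper leaves implicit.
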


\begin{proof}
Consider the application $\phi=(\phi_{1},\phi_{2})\colon \mathopen{[}0,1\mathclose{]}\times Z\to\mathopen{[}0,1\mathclose{]}\times Z$ defined as
\begin{equation*}
\phi_{1}(t,x):=t, \qquad \phi_{2}(t,x):=\varphi(x).
\end{equation*}
By the hypothesis, $\phi$ is well defined and satisfies conditions $(i)$ and $(ii)$ of Corollary~\ref{ThPapiniZanolinAR} (with
$W=\mathopen{[}0,1\mathclose{]}\times Z$). Then there exists $\tilde{z}=(\tilde{t},\tilde{x})\in \mathopen{[}0,1\mathclose{]}\times Z$ such that
$\phi(\tilde{z})=\tilde{z}$, i.e.~$(\tilde{t},\tilde{x})=(\tilde{t},\varphi(\tilde{x}))$. We conclude that $\tilde{x}\in Z$ is a fixed point of
$\varphi$.
\end{proof}

It is well known that Brouwer fixed point theorem is equivalent to the classical \textit{Poin\-ca\-r\'e-Miranda zeros theorem}. 
Now we present a theorem that is the ``fixed point version'' of Poincar\'e-Miranda result, alternatively it can be viewed as a $n$-dimensional
generalization of Bolzano theorem. Our aim is to provide a direct proof 
in the light of the main theorem, without using previous classical results.

The theorem we state below has been proposed in some recent works, in this or in other versions.
As examples, see~\cite{FoGi-2015pp,Ka-2000,Kw-2008fpta,Kw-2008,Mar-2006,Ma-2013ans,Ma-2013,PiZa-2007}.

\begin{theorem}[Poincar\'e-Miranda]
\label{Poincare-Miranda fixed point theorem}
Let $\mathcal{R}=\prod_{i=1}^{n}\mathopen{[}a_{i},b_{i}\mathclose{]}$ be a $n$-dimensional rectangle. We denote by
$I_{i}^{-}=\{x\in\mathcal{R}\colon x_{i}=a_{i}\}$ and $I_{i}^{+}=\{x\in\mathcal{R}\colon x_{i}=b_{i}\}$ its $i$-faces. 
Let $g=(g_{1},\ldots,g_{n})\colon \mathcal{R}\to \mathbb{R}^{n}$ be a continuous vector field such that for each $i\in\{1,\ldots,n\}$ 
one of the two following possibilities is true:
\begin{itemize}
 \item [$(e_{i})$] \quad $g_{i}(x)\leq a_{i}$, $\forall \, x\in I_{i}^{-}$, \quad  $g_{i}(x)\geq b_{i}$, $\forall \, x\in I_{i}^{+}$;
 \item [$(c_{i})$] \quad $g_{i}(x)\geq a_{i}$, $\forall \, x\in I_{i}^{-}$, \quad  $g_{i}(x)\leq b_{i}$, $\forall \, x\in I_{i}^{+}$.
\end{itemize}
Then there exists $z\in\mathcal{R}$ such that $g(z)=z$.
\end{theorem}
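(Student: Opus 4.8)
The plan is to reduce the Poincaré–Miranda statement to Theorem~\ref{ThPapiniZanolinretract} (or, equivalently, to Corollary~\ref{ThPapiniZanolinAR}), exploiting the fact that the rectangle $\mathcal{R}$ is an absolute retract and that a "fixed point of $g$" is exactly a zero of $\mathrm{Id}-g$. First I would dispose of the sign ambiguity: if possibility $(c_i)$ holds for some index $i$, I replace the $i$-th coordinate by the reflection $x_i\mapsto a_i+b_i-x_i$ on that factor. Under this affine involution, which maps $\mathcal{R}$ onto itself, interchanges $I_i^-$ with $I_i^+$, and conjugates $g$ to a new continuous field $\tilde g$, condition $(c_i)$ for $g$ becomes condition $(e_i)$ for $\tilde g$; a fixed point of $\tilde g$ corresponds to one of $g$. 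Hence it is not restrictive to assume that $(e_i)$ holds for every $i\in\{1,\dots,n\}$.

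Next I would set up the cylinder. Write $\mathcal{R}=\mathopen{[}a_1,b_1\mathclose{]}\times A$, where $A:=\prod_{i=2}^{n}\mathopen{[}a_i,b_i\mathclose{]}$ is a compact convex subset of $\mathbb{R}^{n-1}$, hence an AR (indeed a retract of $\mathbb{R}^{n-1}$). Put $\mathcal{C}:=\mathopen{[}a_1,b_1\mathclose{]}\times A=\mathcal{R}$, so $\mathcal{C}_l=\{a_1\}\times A=I_1^-$ and $\mathcal{C}_r=\{b_1\}\times A=I_1^+$. Take $D=W=\mathcal{C}$ and let $\phi:=g$, viewed as a map $\mathcal{C}\to\mathbb{R}\times\mathbb{R}^{n-1}=\mathbb{R}^{n}$ with $\phi_1=g_1$ and $\phi_2=(g_2,\dots,g_n)$. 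Condition $(i)$ is immediate: $g$ is continuous and $\mathcal{C}$ is compact in finite dimension, so $g$ is compact on $W$.

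The crux is verifying condition $(ii)$: given a path $\sigma\colon\mathopen{[}0,1\mathclose{]}\to\mathcal{C}$ with $\sigma(0)\in\mathcal{C}_l$ and $\sigma(1)\in\mathcal{C}_r$ (or vice versa), I must produce a sub-path $\gamma\subseteq\sigma$ with $g(\gamma)\subseteq\mathcal{C}$, $g(\gamma)\cap\mathcal{C}_l\neq\emptyset$, $g(\gamma)\cap\mathcal{C}_r\neq\emptyset$. Here is where the Poincaré–Miranda sign conditions $(e_i)$ must be turned into a topological statement, and I expect this to be the main obstacle — but it is exactly the kind of argument for which the main theorem was designed to be iterated. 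The idea is to argue by induction on the number of "extra" coordinates $2,\dots,n$ that still need to be controlled, using the $1$-dimensional Bolzano-type argument coordinatewise: along $\sigma$ one first extracts, via an intermediate-value argument on the function $s\mapsto g_1(\sigma(s))-\sigma_1(s)$ together with the boundary inequalities $(e_1)$, a sub-path on which $g_1$ stays in $\mathopen{[}a_1,b_1\mathclose{]}$ and still runs from the "$g_1=a_1$ side" to the "$g_1=b_1$ side"; then, for each successive index $i\ge 2$, the faces $I_i^\pm$ of $\mathcal{R}$ are sandwiched by $(e_i)$ so that a further sub-path keeps $g_i$ inside $\mathopen{[}a_i,b_i\mathclose{]}$ without destroying the previously gained endpoint conditions. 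After finitely many such reductions one is left with a sub-path $\gamma$ with $g(\gamma)\subseteq\mathcal{R}=\mathcal{C}$ and with endpoints still on $\mathcal{C}_l$ and $\mathcal{C}_r$ respectively, which is $(ii)$. (Alternatively, and perhaps more cleanly, one can reorganize this as: apply Theorem~\ref{ThPapiniZanolinretract} $n$ times, peeling off one coordinate at a time, each application using only the $1$-dimensional squeeze in that coordinate.) Once $(ii)$ is checked, Theorem~\ref{ThPapiniZanolinretract} yields $\tilde z\in W=\mathcal{R}$ with $\phi(\tilde z)=\tilde z$, i.e.\ $g(z)=z$, which after undoing the reflections of the $(c_i)$-coordinates gives the desired zero of $\mathrm{Id}-g$ on the original rectangle. \qed
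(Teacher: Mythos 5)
There are two genuine gaps here. First, your reduction to the all-expansive case is incorrect: conjugating $g$ by the reflection $\rho_i\colon x_i\mapsto a_i+b_i-x_i$ gives $\tilde g_i(x)=a_i+b_i-g_i(\rho_i(x))$, and for $x\in I_i^-$ one has $\rho_i(x)\in I_i^+$, so $(c_i)$ for $g$ yields $g_i(\rho_i(x))\le b_i$ and hence $\tilde g_i(x)\ge a_i$ --- which is $(c_i)$ again, not $(e_i)$. The reflection preserves the type of each boundary condition rather than swapping it. The device that actually exchanges $(e_i)$ and $(c_i)$ is replacing $g_i(x)$ by $2x_i-g_i(x)$, which is what the paper uses, and in the opposite direction from what you propose: the paper normalizes to \emph{exactly one} expansive index and all remaining indices compressive, because the compressive conditions are what make the second half of its argument work.

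Second, and more seriously, condition $(ii)$ of Theorem~\ref{ThPapiniZanolinretract} simply fails for $\phi=g$ in general, so the ``iterated sub-path extraction'' cannot be carried out. For $i\ge 2$ the hypotheses constrain $g_i$ only on the faces $I_i^{\pm}$, and a path from $\mathcal{C}_l=I_1^-$ to $\mathcal{C}_r=I_1^+$ need not meet those faces at all. Concretely, on $\mathopen{[}0,1\mathclose{]}^2$ take $g_2(x_1,x_2)=2x_2-\tfrac12+10\sin(\pi x_2)$, which satisfies $(e_2)$; along the path $\sigma(s)=(s,\tfrac12)$ one has $g_2\equiv 10.5$, so no sub-path $\gamma$ satisfies $g(\gamma)\subseteq\mathcal{R}$. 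The paper circumvents this by applying the main theorem not to $g$ but to $\tilde g(x)=(g_1(x),p_2(g_2(x)),\ldots,p_n(g_n(x)))$, where $p_i$ is the projection onto $\mathopen{[}a_i,b_i\mathclose{]}$: then $\tilde g$ automatically maps into $\mathbb{R}\times\mathcal{R}_1$, condition $(ii)$ reduces to the one-dimensional Bolzano argument in the first coordinate alone, and the compressive conditions $(c_i)$ are used a posteriori to show that the fixed point of $\tilde g$ is a genuine fixed point of $g$ (if $g_i(z)<a_i$ then $z_i=p_i(g_i(z))=a_i$, so $z\in I_i^-$ and $(c_i)$ forces $g_i(z)\ge a_i$, a contradiction). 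Note also that your parenthetical suggestion of applying the main theorem $n$ times, peeling off one coordinate per application, does not make sense as stated: each application outputs a single fixed point, not a structure that can be fed into a further application.
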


\begin{proof}
First of all we introduce a useful notation. 
Let $I:=\{1,\ldots,n\}$, $I_{e}:=\{i\in I\colon (e_{i})\text{ is valid}\}$ 
and $I_{c}:=\{i\in I\colon (c_{i})\text{ is valid}\}$. By the hypotheses, $I=I_{e}\cup I_{c}$ and $I_{e}\cap I_{c}=\emptyset$.

We first assume that $\emptyset\neq I_{e} =\{k\}$. We define
\begin{equation*}
\mathcal{R}_{k}:=\mathopen{[}a_{1},b_{1}\mathclose{]}\times\dots\times\mathopen{[}a_{k-1},b_{k-1}\mathclose{]}
\times\mathopen{[}a_{k+1},b_{k+1}\mathclose{]}\times\dots\times\mathopen{[}a_{n},b_{n}\mathclose{]}
\end{equation*}
and we observe that $\mathcal{R}_{k}$ is a retract of $\mathbb{R}^{n-1}$, hence $\mathopen{[}a_k,b_k\mathclose{]}\times\mathcal{R}_{k}$ 
is a cylinder where we can apply Theorem~\ref{ThPapiniZanolinretract}. Without loss of generality and for simplicity, we assume $k=1$. 
Consider the operator 
\begin{equation*}
\tilde{g}\colon \mathopen{[}a_{1},b_{1}\mathclose{]}\times\mathcal{R}_{1}=\mathcal{R}\to \mathbb{R}^{n} = \mathbb{R}\times\mathbb{R}^{n-1}
\end{equation*}
defined as
\begin{equation*}
\tilde{g}(x):=(g_{1}(x),p_{2}(g_{2}(x)),\ldots,p_{n}(g_{n}(x))),
\end{equation*}
where $p_{i}(x):=\min\{b_{i},\max\{x_{i},a_{i}\}\}$ is the projection on $\mathopen{[}a_{i},b_{i}\mathclose{]}$, for all $i\in I_{c}$. 
We claim that $\tilde{g}$ and $W=\mathcal{R}$ satisfy conditions $(i)$ and $(ii)$ of Theorem~\ref{ThPapiniZanolinretract}. Clearly $\tilde{g}$ is
continuous (hence compact). Suppose that the path $(\sigma,\overline{\sigma})$, with $\sigma\colon \mathopen{[}0,1\mathclose{]}\to\mathcal{R}$, is
such that $\sigma(0)\in I_{1}^{-}$ and $\sigma(1)\in I_{1}^{+}$. Hypothesis $(e_{1})$ implies that $g_{1}(\sigma(0))\leq a_{1}$ and $g_{1}(\sigma(1))\geq
b_{1}$. Then, by the continuity of $g_{1}$ and $\sigma$, there exists a sub-interval $\mathopen{[}s_{0},s_{1}\mathclose{]}\subseteq\mathopen{[}0,1\mathclose{]}$ such that
$g_{1}(\sigma(\mathopen{[}s_{0},s_{1}\mathclose{]}))=\mathopen{[}a_{1},b_{1}\mathclose{]}$. In particular the sub-path
$\gamma:=\sigma|_{\mathopen{[}s_{0},s_{1}\mathclose{]}}$ is such that $\tilde{g}(\gamma)\subseteq\mathcal{R}$, $\tilde{g}(\gamma)\cap
I_{1}^{-}\neq\emptyset$ and $\tilde{g}(\gamma)\cap I_{1}^{+}\neq\emptyset$. We deduce the validity of $(ii)$.

By Theorem~\ref{ThPapiniZanolinretract}, there exists an element $z=(z_{1},z_{2},\ldots,z_{n})\in\mathcal{R}$ such that $\tilde{g}(z)=z$. 
We claim that $z\in\mathcal{R}$ is a fixed point of $g$. We have to prove that $g_{i}(z)\in\mathopen{[}a_{i},b_{i}\mathclose{]}$ for all $i\in I_{c}$. 
Suppose that there is an index $i\in I_{c}$ such that $g_{i}(z)<a_{i}$ or $g_{i}(z)>b_{i}$. Consider the first case. 
Then $z_{i}=p_{i}(g_{i}(z))=a_{i}$, so $g_{i}(z)\geq a_{i}$, by $(c_{i})$. This is a contradiction. 
Similarly one can prove that $g_{i}(z)>b_{i}$ does not occur. We have the assertion.

Now suppose that $\emptyset\neq I_{e}=\{1,\ldots,j\}$. Define $\hat{g}\colon\mathcal{R}\to \mathbb{R}^{n}$ as follows
\begin{equation*}
\hat{g}(x):=(g_{1}(x),2x_{2}-g_{2}(x),\ldots,2x_j-g_j(x),g_{j+1}(x),\ldots,g_{n}(x)).
\end{equation*}
It is easy to see that $\hat{g}$ satisfies $(e_{1})$ and also $(c_{i})$ for all $i\in\{2,\ldots,n\}$. 
Consequently we can apply the first case and observe that a fixed point of $\hat{g}$ is also a fixed point of $g$.

The last case is $I_{e}=\emptyset$. We proceed as in the second case, defining $\hat{g}\colon\mathcal{R}\to \mathbb{R}^{n}$ as
\begin{equation*}
\hat{g}(x):=(2x_{1}-g_{1}(x),g_{2}(x),\ldots,g_{n}(x)).
\end{equation*}
\end{proof}

\section{Other consequences of the main theorem}\label{section4}

We conclude our work by explore other fixed point theorems available in the literature. 
The final aim is to relate Krasnosel'ski\u{\i} theorems on cones to our main result.

First of all we analyze still a fixed point theorem on cylinders: \textit{Kwong's theorem}. 
We state and prove a more general version compared to \cite[Theorem~3.2]{Kw-2008fpta} and
\cite[Theorem~3]{Kw-2008}.

\begin{theorem}\label{Kwong fixed point theorem}
Let $A$ be an AR and $\mathcal{C}$, $\mathcal{C}_{l}$, $\mathcal{C}_{r}$ as in Corollary~\ref{ThPapiniZanolinAR}.
Consider a compact map $T=(T_{1},T_{2})\colon\mathcal{C}\to \mathbb{R}\times A$.
\begin{itemize}
	\item (\textit{Expansive form}) $T$ has at least a fixed point in $\mathcal{C}$ if
					\begin{itemize}
						\item [$(E_{a})$] $T_{1}(z)\leq a$, $\forall \, z\in\mathcal{C}_{l}$;
						\item [$(E_{b})$] $T_{1}(z)\geq b$, $\forall \, z\in\mathcal{C}_{r}$.
					\end{itemize}
	\item (\textit{Compressive form}) $T$ has at least a fixed point in $\mathcal{C}$ if
					\begin{itemize}
						\item [$(C_{a})$] $T_{1}(z)\geq a$, $\forall \, z\in\mathcal{C}_{l}$;
						\item [$(C_{b})$] $T_{1}(z)\leq b$, $\forall \, z\in\mathcal{C}_{r}$.
					\end{itemize}
\end{itemize}
\end{theorem}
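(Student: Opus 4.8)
The plan is to obtain both assertions from Corollary~\ref{ThPapiniZanolinAR}, applied to the cylinder $\mathcal{C}$ with the choice $D=W=\mathcal{C}$ (this is admissible, since $\mathcal{C}$ is closed). For the expansive form one can take $\phi:=T$ with no modification at all; the compressive form is then reduced to the expansive one by reflecting the first coordinate, turning $(C_{a})$--$(C_{b})$ into $(E_{a})$--$(E_{b})$, in the same spirit as the passage from $(c_{i})$ to $(e_{i})$ in the proof of Theorem~\ref{Poincare-Miranda fixed point theorem}.

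\emph{Expansive form.} Set $\phi:=T\colon\mathcal{C}\to\mathbb{R}\times A$, $W=D:=\mathcal{C}$. Condition $(i)$ of Corollary~\ref{ThPapiniZanolinAR} is exactly the assumed compactness of $T$. To check $(ii)$, take a path $\sigma$ in $\mathcal{C}$ meeting both $\mathcal{C}_{l}$ and $\mathcal{C}_{r}$; after possibly reversing and restricting its parametrization we may assume $\sigma$ starts on $\mathcal{C}_{l}$ and ends on $\mathcal{C}_{r}$. Then the continuous scalar function $u:=T_{1}\circ\sigma$ satisfies $u\le a$ at the initial point (by $(E_{a})$) and $u\ge b$ at the final point (by $(E_{b})$). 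As in the proof of Theorem~\ref{Poincare-Miranda fixed point theorem} --- for instance, letting $\beta$ be the first parameter at which $u$ reaches $b$ and $\alpha$ the last parameter before $\beta$ at which $u$ equals $a$ --- one extracts a sub-interval $\mathopen{[}\alpha,\beta\mathclose{]}$ with $u(\alpha)=a$, $u(\beta)=b$ and $u(\mathopen{[}\alpha,\beta\mathclose{]})\subseteq\mathopen{[}a,b\mathclose{]}$. Since $T_{2}$ takes values in $A$, the sub-path $\gamma:=\sigma|_{\mathopen{[}\alpha,\beta\mathclose{]}}$ satisfies $\gamma\subseteq\sigma\cap W$, $T(\gamma)\subseteq\mathopen{[}a,b\mathclose{]}\times A=\mathcal{C}$, $T(\gamma)\cap\mathcal{C}_{l}\neq\emptyset$ and $T(\gamma)\cap\mathcal{C}_{r}\neq\emptyset$, which is precisely $(ii)$. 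Corollary~\ref{ThPapiniZanolinAR} then gives $\tilde{z}\in\mathcal{C}$ with $T(\tilde{z})=\tilde{z}$.

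\emph{Compressive form.} Introduce $\hat{T}=(\hat{T}_{1},\hat{T}_{2})\colon\mathcal{C}\to\mathbb{R}\times A$,
\begin{equation*}
\hat{T}_{1}(t,x):=2t-T_{1}(t,x),\qquad \hat{T}_{2}(t,x):=T_{2}(t,x).
\end{equation*}
The map $\hat{T}$ is again compact: it is continuous, and its range is contained in the image of the compact set $\mathopen{[}a,b\mathclose{]}\times\overline{T(\mathcal{C})}$ under the continuous map $(t,(s,y))\mapsto(2t-s,y)$. On $\mathcal{C}_{l}$, where $t=a$, hypothesis $(C_{a})$ gives $T_{1}\ge a$, hence $\hat{T}_{1}\le a$, i.e.~$(E_{a})$ holds for $\hat{T}$; on $\mathcal{C}_{r}$, where $t=b$, hypothesis $(C_{b})$ gives $T_{1}\le b$, hence $\hat{T}_{1}\ge b$, i.e.~$(E_{b})$ holds for $\hat{T}$. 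The expansive form, applied to $\hat{T}$, yields $\tilde{z}=(\tilde{t},\tilde{x})\in\mathcal{C}$ with $\hat{T}(\tilde{z})=\tilde{z}$; then $\tilde{t}=2\tilde{t}-T_{1}(\tilde{z})$, so $T_{1}(\tilde{z})=\tilde{t}$, while $T_{2}(\tilde{z})=\hat{T}_{2}(\tilde{z})=\tilde{x}$, and therefore $T(\tilde{z})=\tilde{z}$.

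I expect the only genuinely non-formal point to be the extraction of the sub-path $\gamma$ in the expansive case: one must choose $\mathopen{[}\alpha,\beta\mathclose{]}$ so that $u=T_{1}\circ\sigma$ not only attains $a$ and $b$ but stays in $\mathopen{[}a,b\mathclose{]}$ over the whole sub-interval, for otherwise $T(\gamma)$ need not lie in $\mathcal{C}$; the ``first time $b$ is reached / last previous time $a$ is attained'' choice handles this. The remaining items --- compactness of $\hat{T}$ and the reversal of the two boundary inequalities under the reflection --- are routine.
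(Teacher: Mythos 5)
Your proposal is correct and follows essentially the same route as the paper: the expansive form is verified directly against Corollary~\ref{ThPapiniZanolinAR} with $W=\mathcal{C}$ by extracting, via an intermediate-value argument, a sub-interval on which $T_{1}\circ\sigma$ runs over $\mathopen{[}a,b\mathclose{]}$, and the compressive form is reduced to the expansive one by the same reflection $S_{1}(t,x)=2t-T_{1}(t,x)$. Your extra care in choosing $\mathopen{[}\alpha,\beta\mathclose{]}$ so that $u$ stays in $\mathopen{[}a,b\mathclose{]}$ only makes explicit a step the paper leaves to the reader.
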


\begin{proof}
\textit{Expansive form}. We want to prove that conditions $(i)$ and $(ii)$ of Corollary~\ref{ThPapiniZanolinAR} hold,
with respect to $W=\mathcal{C}$. By the compactness hypothesis, $(i)$ follows. Now suppose that the path $(\sigma,\overline{\sigma})$, with
$\sigma\colon\mathopen{[}0,1\mathclose{]}\to\mathcal{C}$, is such that $\sigma(0)\in\mathcal{C}_{l}$ and $\sigma(1)\in\mathcal{C}_{r}$. 
By hypotheses $(E_{a})$ and $(E_{b})$, $T_{1}(\sigma(0))\leq a$ and $T_{1}(\sigma(1))\geq b$. Then, by the continuity of $T_{1}$ and $\sigma$,
there exists a sub-interval $\mathopen{[}s_{0},s_{1}\mathclose{]}\subseteq\mathopen{[}0,1\mathclose{]}$ such that
$T_{1}(\sigma(\mathopen{[}s_{0},s_{1}\mathclose{]}))=\mathopen{[}a,b\mathclose{]}$. 
In particular the sub-path $\gamma:=\sigma|_{\mathopen{[}s_{0},s_{1}\mathclose{]}}$ is such that $T(\gamma)\subseteq \mathcal{C}$, $T(\gamma)\cap
\mathcal{C}_{l}\neq\emptyset$ and $T(\gamma)\cap \mathcal{C}_{r}\neq\emptyset$. We deduce the validity of $(ii)$.
By Corollary~\ref{ThPapiniZanolinAR} we have the assertion.

\smallskip

\noindent
\textit{Compressive form}. We reduce this case to the expansive form. Define a new operator $S=(S_{1},S_{2})\colon\mathcal{C}\to \mathbb{R}\times A$ as follows
\begin{equation*}
S_{1}(t,x):=2t-T_{1}(t,x), \qquad S_{2}(t,x):=T_{2}(t,x).
\end{equation*}
We observe that the operator $S$ is compact, since $T$ is compact. Moreover, if $t=a$, $T_{1}(t,x)\geq a$ by $(C_{a})$, then $S_{1}(t,x)\leq2a-a=a$. 
Similarly $S_{2}(t,x)\geq b$ if $t=b$, using $(C_{b})$. Then $S$ satisfies the expansive form conditions. 
Consequently there is $\tilde{z}=(\tilde{t},\tilde{x})\in\mathcal{C}$ such that $S(\tilde{z})=\tilde{z}$. Hence
\begin{equation*}
\tilde{t}=S_{1}(\tilde{t},\tilde{x})=2\tilde{t}-T_{1}(\tilde{t},\tilde{x}), \qquad \tilde{x}=S_{2}(\tilde{t},\tilde{x})=T_{2}(\tilde{t},\tilde{x}).
\end{equation*}
We conclude that $\tilde{z}\in\mathcal{C}$ is also a fixed point of $T$.
\end{proof}

\begin{remark}
In the second part of the proof of Theorem~\ref{Kwong fixed point theorem} we bring back the compressive form to the expansive framework
employing the same trick that Kwong uses in \cite{Kw-2008fpta} to transform an expansive operator into a compressive one.

An alternative way to prove the compressive form is to use directly the generalized Schauder fixed point theorem that, 
as already noted, is a direct consequence of our main result.
\end{remark}

Till now we have always considered fixed point theorems of expansive type or we have reduced the problem to them. 
In view of Corollary~\ref{ThPapiniZanolinhomeomorphism}, we present a result of compressive type that allows us to easily prove Krasnosel'ski\u{\i} fixed point theorems.

\begin{corollary}
\label{ThCompressiveHomeomorphism}
Let $Y$ be a metric space and $M'\subseteq Y$ a closed subset homeomorphic to the cylinder $\mathcal{C'}:=\mathopen{[}a',b'\mathclose{]}\times A$,
where $A$ is an AR and $a',b'\in\mathbb{R}$ with $a'<b'$. Call $h\colon M'\to\mathcal{C'}$ the homeomorphism onto its image. Let $M\subseteq M'$ and suppose
there exist $a,b\in\mathbb{R}$ with $a'<a<b<b'$ such that $h(M)=\mathcal{C}:=\mathopen{[}a,b\mathclose{]}\times A$. We define
\begin{equation*}
M_{l}:=h^{-1}(\{a\}\times A) \quad \text{ and } \quad M_{r}:=h^{-1}(\{b\}\times A)
\end{equation*}
the left base and the right base of $M$ respectively.

Consider the operator $\phi\colon M\to Y$ and suppose that
\begin{enumerate}
	\item [$(i)$] $\phi$ is compact on $M$;
	\item [$(ii)$] 	$\phi(M_l)\subseteq h^{-1}(\mathopen{[}a,b'\mathclose{]}\times A)$ and 
								$\phi(M_r)\subseteq h^{-1}(\mathopen{[}a',b\mathclose{]}\times A)$.
\end{enumerate}
Then there exists $\tilde{z}\in M$ such that $\phi(\tilde{z})=\tilde{z}$.
\end{corollary}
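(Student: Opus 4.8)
The plan is to adapt the reduction used for the compressive form of Theorem~\ref{Kwong fixed point theorem}: after transporting everything to the cylinder $\mathcal{C}'=\mathopen{[}a',b'\mathclose{]}\times A$ via $h$, reflect the ``height'' coordinate so that the compressive conditions in $(ii)$ turn into expansive ones, and then invoke the already proven expansive form of Theorem~\ref{Kwong fixed point theorem} (equivalently Corollary~\ref{ThPapiniZanolinhomeomorphism}) on the sub-cylinder $M$. Two preliminary observations prepare the ground. First, since $A$ is an AR, the cylinder $\mathcal{C}'$ is an AR (a finite product of ARs is an AR), hence so is $M'$; being closed in the metric space $Y$, the set $M'$ is then a retract of $Y$, and I fix a retraction $\rho\colon Y\to M'$. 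Second, replacing $\phi$ by $\rho\circ\phi$ leaves hypotheses $(i)$ and $(ii)$ untouched — indeed for $z\in M_l\cup M_r$ we have $\phi(z)\in M'$ by $(ii)$, so $\rho(\phi(z))=\phi(z)$ — and it forces the map to take values in $M'$; so, transporting by $h$, I may work with a compact operator
\begin{equation*}
\Psi=(\Psi_1,\Psi_2)\colon\mathcal{C}=\mathopen{[}a,b\mathclose{]}\times A\longrightarrow\mathcal{C}'=\mathopen{[}a',b'\mathclose{]}\times A\subseteq\mathbb{R}\times A
\end{equation*}
satisfying $\Psi_1(a,x)\geq a$ and $\Psi_1(b,x)\leq b$ for every $x\in A$ (the translation of $(ii)$), whose fixed points in $\mathcal{C}$ I will want to recognize as fixed points of $\phi$ in $M$.

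Next I would introduce $S=(S_1,S_2)\colon\mathcal{C}\to\mathbb{R}\times A$ by $S_1(t,x):=2t-\Psi_1(t,x)$ and $S_2(t,x):=\Psi_2(t,x)$. This $S$ is compact (because $\Psi$ is), it genuinely takes values in $\mathbb{R}\times A$ — note $\Psi_2(t,x)\in A$ since $\Psi$ maps into $\mathcal{C}'$ — and
\begin{equation*}
S_1(a,x)=2a-\Psi_1(a,x)\leq a,\qquad S_1(b,x)=2b-\Psi_1(b,x)\geq b\qquad(\forall\,x\in A),
\end{equation*}
so $S$ fulfils conditions $(E_a)$ and $(E_b)$ of the expansive form of Theorem~\ref{Kwong fixed point theorem} on the AR-based cylinder $\mathcal{C}$. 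That theorem then produces $(\tilde t,\tilde x)\in\mathcal{C}$ with $S(\tilde t,\tilde x)=(\tilde t,\tilde x)$, i.e.\ $2\tilde t-\Psi_1(\tilde t,\tilde x)=\tilde t$ and $\Psi_2(\tilde t,\tilde x)=\tilde x$; hence $\Psi_1(\tilde t,\tilde x)=\tilde t$ too, so $(\tilde t,\tilde x)$ is a fixed point of $\Psi$, and $\tilde z:=h^{-1}(\tilde t,\tilde x)\in M$ is the desired fixed point of $\phi$ once the reduction of the first paragraph is made precise.

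The delicate step — the one I expect to be the main obstacle — is precisely that last ``once the reduction is made precise'': one has to be sure that the fixed point $(\tilde t,\tilde x)$ of the transported, retracted map $\Psi$ corresponds to an honest fixed point of $\phi$, i.e.\ that $\phi(\tilde z)\in M'$ at the point produced, so that $\rho$ acts trivially on $\phi(\tilde z)$. When $\tilde z$ lies on $M_l\cup M_r$ this is immediate from $(ii)$; for the remaining points one exploits that $\tilde t\in\mathopen{[}a,b\mathclose{]}\subseteq\mathopen{]}a',b'\mathclose{[}$, so $\tilde z$ sits in the ``interior slab'' $h^{-1}(\mathopen{]}a',b'\mathclose{[}\times A)$ of $M'$, together with the strict nesting $a'<a<b<b'$ and — in the concrete applications such as Krasnosel'ski\u{\i}'s theorem — the radial nature of the retraction $\rho$ onto $M'$. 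Everything else (the compactness bookkeeping, the AR facts, and the verification of $(E_a)$, $(E_b)$) is routine.
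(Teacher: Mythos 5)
Your overall route---transport everything to the cylinder by $h$, reflect the height coordinate via $S_{1}=2t-\Psi_{1}$, and invoke the expansive form of Theorem~\ref{Kwong fixed point theorem}---is in substance the paper's own: the paper applies the compressive form of Theorem~\ref{Kwong fixed point theorem} to $\psi:=h\circ\phi\circ h^{-1}$, and that compressive form is itself proved by exactly this reflection. The genuine gap is the retraction $\rho\colon Y\to M'$ you insert to make the transported map everywhere defined. The fixed point you produce satisfies $\rho(\phi(\tilde{z}))=\tilde{z}$, i.e.\ it is a fixed point of $\rho\circ\phi$, and your argument for upgrading it to a fixed point of $\phi$ does not work: knowing that $\tilde{z}$ lies in the ``interior slab'' $h^{-1}(\mathopen{]}a',b'\mathclose{[}\times A)$ tells you nothing about $\phi(\tilde{z})$, because a retraction of $Y$ onto $M'$ is the identity only \emph{on} $M'$ and is otherwise free to send points of $Y\setminus M'$ to interior points of $M'$. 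Your appeal to ``the radial nature of $\rho$ in the concrete applications'' is an admission that the general statement is not being proved; as written, your argument establishes the corollary only under the additional hypothesis $\phi(M)\subseteq M'$ (in which case $\rho$ is superfluous).

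For comparison, the paper does not retract at all: it notes that hypothesis $(ii)$ translates into $\psi(\{a\}\times A)\subseteq\mathopen{[}a,b'\mathclose{]}\times A$ and $\psi(\{b\}\times A)\subseteq\mathopen{[}a',b\mathclose{]}\times A$ and feeds $\psi$ directly to the compressive form of Theorem~\ref{Kwong fixed point theorem}. Observe that this form only requires the second component of $T$ to land in $A$---the first component may leave $\mathopen{[}a,b\mathclose{]}$---so there is no need to force the height coordinate back into $\mathopen{[}a',b'\mathclose{]}$, which is what your $\rho$ was doing. The one point this presupposes is that $\psi$ is defined on all of $\mathcal{C}$, i.e.\ that $\phi(M)\subseteq M'$; this is tacit in the statement but holds in the paper's application (Theorem~\ref{Krasnosel'skii normed fixed point theorem}, compressive form), where $\phi=r\circ T|_{M}$ takes values in $M'=K_{a',b'}$ by construction. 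The correct repair of your proof is therefore to drop $\rho$ and either add $\phi(M)\subseteq M'$ as a (harmless) hypothesis or restrict attention to the closed set $\{z\in M\colon\phi(z)\in M'\}$ before transporting by $h$---not to compose with a retraction of $Y$, which manufactures spurious fixed points that cannot be recognized as fixed points of $\phi$.
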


\begin{proof}
First of all we observe that condition $(ii)$ is equivalent to require that $\psi(\{a\}\times A)\subseteq \mathopen{[}a,b'\mathclose{]}\times A$
and $\psi(\{b\}\times A)\subseteq\mathopen{[}a',b\mathclose{]}\times A$, where $\psi:=h\circ\phi\circ h^{-1}$.
Secondly, as in the proof of Corollary~\ref{ThPapiniZanolinhomeomorphism}, we consider the homeomorphism $h|_{M}$ between $M$ and $\mathcal{C}$. 
Finally, we apply the compressive form in Theorem~\ref{Kwong fixed point theorem} to $T=\psi$ and $\mathcal{C}$. Hence, we obtain the assertion.
\end{proof}

\begin{remark}
As we shall see in the sequel, Corollary~\ref{ThCompressiveHomeomorphism} is a useful tool to prove compressive forms. 
However, we can also note that, unlike Corollary~\ref{ThPapiniZanolinhomeomorphism}, 
we do not have a localization of fixed points (for example inside a closed subset of $M$).
\end{remark}

Hereafter we deal with cones in normed linear spaces. 
Let $(X,\|\cdot\|)$ be a normed linear space. We recall that 
a subset $K\subseteq X$ is a \textit{cone} if the following conditions are satisfied:
\begin{itemize}
 \item $K$ is closed;
 \item $\alpha u + \beta v \in K$, $\forall \, u,v\in K$, $\forall \, \alpha,\beta \in \mathbb{R}^{+}$;
 \item $K\cap(-K)=\{0\}$, i.e.~if $u\in K$ and $-u\in K$ then $u=0$.
\end{itemize}

In the following example we analyze some subsets of a cone in a normed space. 
What we observe allows us to define a homeomorphism between a region inside the cone and a suitable cylinder.

\begin{example}\label{exampleK}
Let $(X,\|\cdot\|)$ be a normed linear space and $K\subseteq X$ a cone.
By Dugundji extension theorem, $K$ is an AR, since $K$ is convex.

Consider the subset $K_{a}:=\{x\in K\colon \|x\|=a\}$, with $a\in\mathbb{R}^{+}$. 
We claim that $K_{a}$ is a retract of $K$, therefore it is an AR (cf.~\cite[Proposition~7.2, p.~162]{GrDu-2003}). If $a=0$, we consider the retraction
$r_{a}\equiv0$. If $a\neq0$, we fix an element $y\in K\setminus\{0\}$ and we define the retraction $r_{a}\colon K\to K_{a}$ as follows
\begin{equation*}
r_{a}(x):=a\,\dfrac{x+(a-\|x\|)^{2}y}{\|x+(a-\|x\|)^{2}y\|}, \quad x\in K.
\end{equation*}
The map $r_{a}$ is well defined, because $x+(a-\|x\|)^{2}y\in K\setminus\{0\}$, $\forall \, x\in K$. 
Moreover, $r_{a}(x)\in K$ and $\|r_{a}(x)\|=a$, $\forall \, x\in K$. Clearly $r_{a}$ is continuous and $r|_{K_{a}}=Id_{K_{a}}$. 

Let $l\colon X\to\mathbb{R}$ be a continuous functional, positive (i.e.~$l(x)\geq0$, $\forall \, x\in K$) and positively homogeneous. 
Consider the set $K_{a}^{l}:=\{x\in K\colon l(x)=a\}$, with $a\in\mathbb{R}^{+}$.
If the functional $l$ is also strictly positive (i.e.~$l(x)>0$, $\forall \, x\in K\setminus\{0\}$), 
using the previous retractions (where we replace $\|\cdot\|$ with $l(\cdot)$), we obtain that $K_{a}^{l}$ is a retract of $K$, therefore an AR.

Suppose that $l$ is not strictly positive and assume that $l$ is linear and $l\not\equiv0$ on $K$ (otherwise $K_{a}^{l}$ is the whole cone $K$ or the empty set). 
Fix an element $y\in K$ with $l(y)\neq0$. Fix $a\neq0$. Define the retraction $r_{a}\colon K\to K_{a}^{l}$ as follows
\begin{equation*}
r_{a}(x):=a\,\dfrac{x+(a-l(x))^{2}y}{l(x+(a-l(x))^{2}y)}, \quad x\in K.
\end{equation*}
The map $r_{a}$ is well defined, because $l(x+(a-l(x))^{2}y)=l(x)+(a-l(x))^{2}l(y)>0$, $\forall \, x\in K$. 
Moreover, $r_{a}(x)\in K$ and $l(r_{a}(x))=a$, $\forall \, x\in K$. Clearly $r_{a}$ is continuous and $r_{a}|_{K_{a}^{l}}=Id_{K_{a}^{l}}$. If $a=0$,
$K_{a}^{l}$ is closed, convex (by the linearity of $l$) and nonempty ($0\in K_{a}^{l}$). By Dugundji extension theorem, we conclude that $K_{0}^{l}$
is a retract of $X$, so it is an AR.

We note that we can alternatively suppose that $l\colon K\to\mathbb{R}^{+}$ is defined only on $K$, is positively homogeneous and is such that
\begin{equation*}
l(\alpha x_{1} + \beta x_{2}) \geq \alpha l(x_{1}) + \beta l(x_{2}), \quad  \forall \, x_{1},x_{2}\in K, \; \forall \, \alpha,\beta\geq0.
\end{equation*}
In the case of positively homogeneous functionals, we stress that this hypothesis is equivalent to assume that $l$ is concave on $K$.

Furthermore, we also remark that the assumption of positive homogeneity is necessary for our definition of the retractions $r_{a}$.

For definitions, properties and characterizations of functionals defined on cones, we refer to \cite{GuLa-1988}.
\end{example}

Now we have all the tools to enunciate and prove Krasnosel'ski\u{\i} fixed point theorem. 
The original version and the original proof are located in \cite{Kr-1960,Kr-1964}. 
Alternative proofs can be found in \cite{GrDu-2003,GuLa-1988,Sc-1976}.

\begin{theorem}[Krasnosel'ski\u{\i}]
\label{Krasnosel'skii normed fixed point theorem}
Let $(X,\|\cdot\|)$ be a normed linear space and $K$ a cone in $X$. 
Fix $a,b\in\mathbb{R}^{+}$ with $0<a<b$. Define $K_{a,b}:=\{x\in K\colon a\leq\|x\|\leq b\}$ 
and $K_{d}:=\{x\in K\colon \|x\|=d\}$, for all $d\in\mathbb{R}^{+}$.

Consider a compact map $T\colon K\to K$.
\begin{itemize}
 \item (\textit{Expansive form}) $T$ has at least a fixed point in $K_{a,b}$ if
				\begin{itemize}
					\item [$(E_{a})$] $\|T(x)\|\leq\|x\|$, $\forall \, x\in K_{a}$;
					\item [$(E_{b})$] $\|T(x)\|\geq\|x\|$, $\forall \, x\in K_{b}$.
				\end{itemize}
 \item (\textit{Compressive form}) $T$ has at least a fixed point in $K_{a,b}$ if
				\begin{itemize}
					\item [$(C_{a})$] $\|T(x)\|\geq\|x\|$, $\forall \, x\in K_{a}$;
					\item [$(C_{b})$] $\|T(x)\|\leq\|x\|$, $\forall \, x\in K_{b}$.
				\end{itemize}
\end{itemize}
\end{theorem}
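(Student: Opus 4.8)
The plan is to realize, for $0<a<b$, the ``annular'' set $K_{a,b}$ as a topological cylinder over an $AR$ base, and then feed this into Corollaries~\ref{ThPapiniZanolinhomeomorphism} and~\ref{ThCompressiveHomeomorphism}. By Example~\ref{exampleK} the ``sphere in the cone'' $K_{a}=\{x\in K\colon\|x\|=a\}$ is an $AR$, and (since $\|x\|\geq a>0$ on $K_{a,b}$ and since $K$ is a cone) the radial map
\begin{equation*}
h\colon K_{a,b}\to [a,b]\times K_{a},\qquad h(x):=\Bigl(\|x\|,\ a\,\tfrac{x}{\|x\|}\Bigr),\qquad h^{-1}(t,u)=\tfrac{t}{a}\,u,
\end{equation*}
is a homeomorphism onto $[a,b]\times K_{a}$ carrying $K_{a}$ onto $\{a\}\times K_{a}$ and $K_{b}$ onto $\{b\}\times K_{a}$. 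Since $K_{a,b}$ is closed in $X$, this puts us in the setting of Corollary~\ref{ThPapiniZanolinhomeomorphism} with $Y=X$, $M=K_{a,b}$, $A=K_{a}$, $M_{l}=K_{a}$, $M_{r}=K_{b}$.

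For the \emph{expansive form} I would take $N=K$, $\phi=T$, $V=K_{a,b}$, so condition $(i)$ of Corollary~\ref{ThPapiniZanolinhomeomorphism} is just the compactness of $T$. For condition $(ii)$, given a path $\sigma\colon[0,1]\to K_{a,b}$ with $\sigma(0)\in K_{a}$ and $\sigma(1)\in K_{b}$, consider $f(s):=\|T(\sigma(s))\|$; by $(E_{a})$ we have $f(0)\leq\|\sigma(0)\|=a$, by $(E_{b})$ we have $f(1)\geq\|\sigma(1)\|=b$, and a routine intermediate-value argument yields a subinterval $[s_{0},s_{1}]\subseteq[0,1]$ with $f(s_{0})=a$, $f(s_{1})=b$ and $a\leq f(s)\leq b$ on $[s_{0},s_{1}]$. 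Then $\gamma:=\sigma|_{[s_{0},s_{1}]}$ is a sub-path of $\sigma$ with $T(\gamma)\subseteq K$ and $\|T(\gamma(s))\|\in[a,b]$, hence $T(\gamma)\subseteq K_{a,b}=M$, while $T(\gamma(s_{0}))\in K_{a}=M_{l}$ and $T(\gamma(s_{1}))\in K_{b}=M_{r}$. Corollary~\ref{ThPapiniZanolinhomeomorphism} then produces $\tilde z\in K_{a,b}$ with $T(\tilde z)=\tilde z$.

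For the \emph{compressive form} I would use Corollary~\ref{ThCompressiveHomeomorphism}, which forces one to enlarge the annulus. Writing $R:=\sup_{x\in K}\|T(x)\|$, which is finite since $T$ is compact, I pick $b'>\max\{b,R\}$ and some $a'\in(0,a)$; I set $M':=K_{a',b'}$, $A:=K_{a'}$ (again an $AR$ by Example~\ref{exampleK}), and the analogous radial homeomorphism $h\colon K_{a',b'}\to[a',b']\times K_{a'}$, so that $h(K_{a,b})=[a,b]\times K_{a'}$, $M_{l}=K_{a}$, $M_{r}=K_{b}$ and $h^{-1}([a,b']\times K_{a'})=K_{a,b'}$, $h^{-1}([a',b]\times K_{a'})=K_{a',b}$ (with the obvious notation $K_{s,t}:=\{x\in K\colon s\leq\|x\|\leq t\}$). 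Condition $(i)$ is again compactness of $T$; for $(ii)$, if $x\in K_{a}$ then $a\leq\|T(x)\|$ by $(C_{a})$ and $\|T(x)\|\leq R<b'$, so $T(K_{a})\subseteq K_{a,b'}$, and if $x\in K_{b}$ then $\|T(x)\|\leq b$ by $(C_{b})$ and $\|T(x)\|\geq a'$ provided $a'$ was taken small enough, so $T(K_{b})\subseteq K_{a',b}$. Corollary~\ref{ThCompressiveHomeomorphism} then yields a fixed point in $K_{a,b}$.

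The one point that needs care lies in the compressive case, in the choice of the inner radius $a'$: to actually run Corollary~\ref{ThCompressiveHomeomorphism} (equivalently, to apply the compressive form of Theorem~\ref{Kwong fixed point theorem} to $h\circ T\circ h^{-1}$) one needs $\|T(x)\|\geq a'$ not merely on $K_{b}$ but on all of $K_{a,b}$, i.e.\ $T$ bounded away from the vertex $0$ on $K_{a,b}$. When this holds it follows immediately from compactness; in the degenerate situation one first perturbs $T$ slightly away from $0$ and then passes to a limit. All the remaining ingredients --- the $AR$ property of $K_{a}$ (Example~\ref{exampleK}), the radial homeomorphism, and the intermediate-value step in the expansive case --- are routine.
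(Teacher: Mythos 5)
Your expansive-form argument is essentially the paper's: the same radial homeomorphism onto a cylinder over an AR base (the paper uses $K_{1}$, you use $K_{a}$, which changes nothing), the same intermediate-value extraction of the sub-path $\gamma$, and the same appeal to Corollary~\ref{ThPapiniZanolinhomeomorphism}. That half is fine.

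The compressive form has a genuine gap. To run Corollary~\ref{ThCompressiveHomeomorphism} the conjugated map $\psi=h\circ\phi\circ h^{-1}$ must be defined on all of $\mathcal{C}$ with values in $\mathopen{[}a',b'\mathclose{]}\times A$, i.e.\ one needs $\phi(M)\subseteq M'$; with your choice $\phi=T|_{K_{a,b}}$ this requires $\inf_{x\in K_{a,b}}\|T(x)\|\geq a'$ for some $a'>0$. The hypotheses $(C_{a})$, $(C_{b})$ control $T$ only on the two spheres $K_{a}$, $K_{b}$: on the interior of the annulus $T$ may well take the value $0$, and compactness of $T$ only yields the upper bound $R$ (the infimum of $\|\cdot\|$ over the compact set $\overline{T(K_{a,b})}$ can be $0$). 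You flag this yourself, but the proposed repair --- ``perturb $T$ slightly away from $0$ and pass to a limit'' --- is not carried out and is not routine: a perturbation such as $T+\tfrac{1}{n}y_{0}$ with $y_{0}\in K\setminus\{0\}$ generally destroys both $(C_{a})$ and $(C_{b})$, since in a normed space $\|u+v\|$ need be neither $\geq\|u\|$ nor $\leq\|u\|$, so one would have to rerun the argument with shifted radii $a_{n}<a<b<b_{n}$ and then justify the passage to the limit. The paper closes this hole with a single device your proposal is missing: it takes $\phi:=r\circ T|_{K_{a,b}}$, where $r$ is the retraction of $K$ onto $K_{a',b'}$ glued from the radial retractions $r_{a'}$, $r_{b'}$ of Example~\ref{exampleK}. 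This $\phi$ lands in $M'$ by construction; conditions $(C_{a})$, $(C_{b})$ still give hypothesis $(ii)$ of Corollary~\ref{ThCompressiveHomeomorphism}, because $r$ moves only points whose norm lies outside $\mathopen{[}a',b'\mathclose{]}$ and places them at norm exactly $a'$ or $b'$; and any fixed point $\tilde{z}\in K_{a,b}$ of $r\circ T$ is automatically a fixed point of $T$, since $\|T(\tilde{z})\|<a'$ would force $\|\tilde{z}\|=a'<a$ and $\|T(\tilde{z})\|>b'$ would force $\|\tilde{z}\|=b'>b$. You should adopt this retraction trick (or fully execute and justify the perturbation-and-limit argument) to complete the compressive case.
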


\begin{proof}
Consider the cylinder $\mathcal{C}=\mathopen{[}a,b\mathclose{]}\times K_{1}$. 
As observed in Example \ref{exampleK}, the set $K_{1}$ is a retract of $K$ (hence of $X$). 
Now we define an homeomorphism $h$ between $K_{a,b}$ and $\mathcal{C}$ as follows
\begin{equation*}
h(x):=\biggl{(}\|x\|,\dfrac{x}{\|x\|}\biggr{)}, \quad x\in K_{a,b}.
\end{equation*}
Clearly $h$ is a continuous bijection with continuous inverse $h^{-1}\colon (\alpha,z)\mapsto \alpha z$.
Moreover, we note that $K_{a}=h^{-1}(\{a\}\times K_{1})$ and $K_{b}=h^{-1}(\{b\}\times K_{1})$.

\smallskip

\noindent
\textit{Expansive form}. 
We use Corollary~\ref{ThPapiniZanolinhomeomorphism} with $Y=X$, $N=K$ and $M=V=K_{a,b}$. 
Condition $(i)$ is obvious. We prove $(ii)$. Suppose that the path $(\sigma,\overline{\sigma})$, with $\sigma\colon\mathopen{[}0,1\mathclose{]}\to
K_{a,b}$, is such that $\sigma(0)\in K_{a}$ and $\sigma(1)\in K_{b}$. 
From $(E_{a})$ and $(E_{b})$ we derive that $\|T(\sigma(0))\|\leq a$ and
$\|T(\sigma(1))\|\geq b$. By the continuity of $\|T(\sigma(\cdot))\|$, there exists a sub-interval
$\mathopen{[}s_{0},s_{1}\mathclose{]}\subseteq\mathopen{[}0,1\mathclose{]}$ such that
$\|T(\sigma(\mathopen{[}s_{0},s_{1}\mathclose{]})\|=\mathopen{[}a,b\mathclose{]}$. In particular, the sub-path
$\gamma:=\sigma|_{\mathopen{[}s_{0},s_{1}\mathclose{]}}$ is such that $T(\gamma)\subseteq K_{a,b}$, $T(\gamma)\cap K_{a}\neq\emptyset$ and $T(\gamma)\cap
K_{b}\neq\emptyset$. We deduce the validity of $(ii)$, hence the claim.

\smallskip

\noindent
\textit{Compressive form}.
Fix $\varepsilon>0$ such that $a-\varepsilon>0$. Define $a'=a-\varepsilon$ and $b'=b+\varepsilon$. Set $M'=K_{a',b'}$ and $\phi:=r\circ T|_{M}$, where $r\colon K\to K$ is defined
as follows
\begin{equation*}
r(z) = \begin{cases}
\, r_{a'}(z), & \text{if } z\in K_{0,a'}; \\ 
\, z,         & \text{if } z\in K_{a',b'}; \\
\, r_{b'}(z), & \text{if } z\in K\setminus K_{0,b'}; 
\end{cases}
\end{equation*}
and $r_{a'},r_{b'}$ are the retractions defined in Example \ref{exampleK}.
Using Corollary~\ref{ThCompressiveHomeomorphism} we obtain the thesis. 
\end{proof}

{} From what has been discussed in Example~\ref{exampleK}, one can easily generalize Theorem~\ref{Krasnosel'skii normed fixed point theorem} by the following result.
For other versions of Krasnosel'ski\u{\i} fixed point theorem, we refer to \cite{GuLa-1988,LeWi-1979,Sc-1976}.

\begin{theorem}\label{Krasnosel'skii l fixed point theorem}
Let $(X,\|\cdot\|)$ be a normed linear space and $K$ a cone in $X$. Let $l\colon X\to\mathbb{R}$ be a continuous functional, 
strictly positive and positively homogeneous. 
Fix $a,b\in\mathbb{R}^{+}$ with $a<b$. Define $K_{a,b}^{l}:=\{x\in K\colon a\leq l(x)\leq b\}$ and $K_{d}^{l}:=\{x\in K\colon l(x)=d\}$, for all $d\in\mathbb{R}^{+}$.

Consider a compact map $T\colon K\to K$.
\begin{itemize}
	\item (\textit{Expansive form}) $T$ has at least a fixed point in $K_{a,b}^{l}$ if
					\begin{itemize}
						\item [$(E_{a}^{l})$] $l(T(x))\leq a$, $\forall \, x\in K_{a}^{l}$;
						\item [$(E_{b}^{l})$] $l(T(x))\geq b$, $\forall \, x\in K_{b}^{l}$.
					\end{itemize}
	\item (\textit{Compressive form}) $T$ has at least a fixed point in $K_{a,b}^{l}$ if
					\begin{itemize}
						\item [$(C_{a}^{l})$] $l(T(x))\geq a$, $\forall \, x\in K_{a}^{l}$;
						\item [$(C_{b}^{l})$] $l(T(x))\leq b$, $\forall \, x\in K_{b}^{l}$.
					\end{itemize}
\end{itemize}
\end{theorem}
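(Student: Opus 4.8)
The plan is to run the proof of Theorem~\ref{Krasnosel'skii normed fixed point theorem} essentially verbatim, replacing the norm $\|\cdot\|$ throughout by the functional $l$. Everything rests on Example~\ref{exampleK}: since $l$ is strictly positive and positively homogeneous, for every $d\in\mathbb{R}^{+}$ the level set $K_{d}^{l}=\{x\in K\colon l(x)=d\}$ is a retract of $K$, hence an AR, via the retractions built there (with $\|\cdot\|$ replaced by $l(\cdot)$). These are exactly the ingredients used in the norm case. First I would introduce the cylinder $\mathcal{C}=\mathopen{[}a,b\mathclose{]}\times K_{1}^{l}$ together with the map $h\colon K_{a,b}^{l}\to\mathcal{C}$, $h(x)=(l(x),x/l(x))$, whose inverse is $(\alpha,z)\mapsto\alpha z$: positive homogeneity gives $l(x/l(x))=1$ and $l(\alpha z)=\alpha$ for $z\in K_{1}^{l}$, while strict positivity (here one uses $a>0$, as in Theorem~\ref{Krasnosel'skii normed fixed point theorem}) gives $l(x)>0$ on $K_{a,b}^{l}$, so $h$ is a homeomorphism onto its image with $K_{a}^{l}=h^{-1}(\{a\}\times K_{1}^{l})$ and $K_{b}^{l}=h^{-1}(\{b\}\times K_{1}^{l})$.

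For the expansive form I would apply Corollary~\ref{ThPapiniZanolinhomeomorphism} with $Y=X$, $N=K$ and $M=V=K_{a,b}^{l}$; condition $(i)$ is immediate from the compactness of $T$. For $(ii)$, passing to a sub-path we may take $\sigma\colon\mathopen{[}0,1\mathclose{]}\to K_{a,b}^{l}$ with $\sigma(0)\in K_{a}^{l}$ and $\sigma(1)\in K_{b}^{l}$; then $(E_{a}^{l})$ and $(E_{b}^{l})$ give $l(T(\sigma(0)))\leq a$ and $l(T(\sigma(1)))\geq b$, and the continuous function $f(s)=l(T(\sigma(s)))$ admits a sub-interval $\mathopen{[}s_{0},s_{1}\mathclose{]}$, obtained by letting $s_{1}$ be the first instant where the value $b$ is attained and $s_{0}$ the last instant before $s_{1}$ where the value $a$ is attained, on which $f$ stays in $\mathopen{[}a,b\mathclose{]}$ and attains both endpoints. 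Hence $\gamma:=\sigma|_{\mathopen{[}s_{0},s_{1}\mathclose{]}}$ satisfies $T(\gamma)\subseteq K_{a,b}^{l}$, $T(\gamma)\cap K_{a}^{l}\neq\emptyset$ and $T(\gamma)\cap K_{b}^{l}\neq\emptyset$, which is $(ii)$, and Corollary~\ref{ThPapiniZanolinhomeomorphism} yields the fixed point in $K_{a,b}^{l}$.

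For the compressive form I would reuse the device in the proof of Theorem~\ref{Krasnosel'skii normed fixed point theorem}: fix $\varepsilon>0$ with $a-\varepsilon>0$, put $a'=a-\varepsilon$, $b'=b+\varepsilon$, $M'=K_{a',b'}^{l}$, and let $r\colon K\to K_{a',b'}^{l}$ be the retraction equal to $r_{a'}$ on $K_{0,a'}^{l}$, to the identity on $K_{a',b'}^{l}$, and to $r_{b'}$ on $K\setminus K_{0,b'}^{l}$, with $r_{a'},r_{b'}$ as in Example~\ref{exampleK} and $K_{0,d}^{l}:=\{x\in K\colon l(x)\leq d\}$. Setting $\phi:=r\circ T|_{K_{a,b}^{l}}$ (whose compactness is clear from that of $T$), the hypotheses $(C_{a}^{l})$ and $(C_{b}^{l})$ turn, via $h$, exactly into condition $(ii)$ of Corollary~\ref{ThCompressiveHomeomorphism}: for $x\in K_{a}^{l}$ one has $l(T(x))\geq a>a'$, whence $r(T(x))\in K_{a,b'}^{l}=h^{-1}(\mathopen{[}a,b'\mathclose{]}\times K_{1}^{l})$, and for $x\in K_{b}^{l}$ one has $l(T(x))\leq b<b'$, whence $r(T(x))\in K_{a',b}^{l}=h^{-1}(\mathopen{[}a',b\mathclose{]}\times K_{1}^{l})$. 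Corollary~\ref{ThCompressiveHomeomorphism} then gives $\tilde{z}\in K_{a,b}^{l}$ with $r(T(\tilde{z}))=\tilde{z}$; since $a\leq l(\tilde{z})\leq b$, the point $\tilde{z}$ cannot arise from the $r_{a'}$ or $r_{b'}$ branch, so $T(\tilde{z})\in K_{a',b'}^{l}$ and therefore $T(\tilde{z})=r(T(\tilde{z}))=\tilde{z}$.

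I do not expect a genuine obstacle, since the statement is designed to follow easily from Example~\ref{exampleK}: the only steps that are not purely mechanical are the short ``interval of values'' argument in the expansive case and the verification that the compressive hypotheses coincide with condition $(ii)$ of Corollary~\ref{ThCompressiveHomeomorphism}, followed by the observation that a fixed point of $r\circ T$ lying in $K_{a,b}^{l}$ must in fact be a fixed point of $T$. The sole use of the assumptions on $l$ is to make the homeomorphism $h$ and the retractions $r_{d}$ available, exactly as explained in Example~\ref{exampleK}.
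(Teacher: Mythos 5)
Your proposal is correct and follows exactly the route the paper takes: the paper's proof of this theorem consists of the single line that it is the same as the proof of Theorem~\ref{Krasnosel'skii normed fixed point theorem} with $\|\cdot\|$ replaced by $l(\cdot)$, which is precisely what you carry out (via the retractions of Example~\ref{exampleK}, the homeomorphism $h(x)=(l(x),x/l(x))$, and Corollaries~\ref{ThPapiniZanolinhomeomorphism} and~\ref{ThCompressiveHomeomorphism}). You in fact supply more detail than the paper does, including the correct observation that the argument needs $a>0$ as in the norm version.
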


\begin{proof}
The proof is the same of Theorem~\ref{Krasnosel'skii normed fixed point theorem}, replacing $\|\cdot\|$ with $l(\cdot)$.
\end{proof}

\begin{remark}
We note that usually Krasnosel'ski\u{\i} fixed point theorems are set on Banach spaces, while our versions are also valid on normed spaces.
Furthermore, it is typically required that the neighbourhoods of the origin are bounded and that the operator is completely continuous (which is
equivalent to the compactness, if the neighbourhoods are bounded). 
We remark that hypotheses of Theorem~\ref{Krasnosel'skii normed fixed point theorem} and Theorem~\ref{Krasnosel'skii l fixed point theorem}
do not require the boundedness of the set $\{x\in K\colon l(x)\leq b\}$, but we demand the compactness of the operator $T$.
\end{remark}

We end this section by considering other forms for the domain of the operator $T$. We start with 
the following theorem, still presented in \cite{Kw-2008fpta,Kw-2008} and originally set in a Banach space. See also \cite[Corollary~6.3, p.~452]{GrDu-2003}.

\begin{theorem}
\label{Kwong L fixed point theorem}
Let $(X,\|\cdot\|)$ be a normed linear space and $(B[x_{i},r_{i}])_{i}$ a collection of $2n$ closed balls pairwise disjoint contained in $B[0,R]$.
Define $L:=B[0,R]\setminus\bigcup_{i=1}^{2n}B(x_{i},r_{i})$ and consider a compact map $T\colon L\to B[0,R]$ such that $T(\partial B(x_{i},r_{i}))\subseteq
B[x_{i},r_{i}]$ for all $i\in\{1,\ldots,n\}$. Then $T$ has at least a fixed point in $L$.
\end{theorem}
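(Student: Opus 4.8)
The plan is to fill in the holes of $L$ by Dugundji extensions, turning $T$ into a compact self-map $\hat{T}$ of the whole ball $B[0,R]$, and then to recover the localization of the fixed point inside $L$ through the additivity of the fixed point index. For the extension step fix $i$: the set $B[x_{i},r_{i}]$ is closed, convex and nonempty, hence an AR, and $\partial B(x_{i},r_{i})$ is a closed subset of $X$ on which $T$ takes values in $B[x_{i},r_{i}]$; by Dugundji extension theorem (\cite{Du-1951}, see also \cite[Theorem~7.4, p.~163--164]{GrDu-2003}) there is a continuous map $T_{i}\colon B[x_{i},r_{i}]\to B[x_{i},r_{i}]$ which agrees with $T$ on $\partial B(x_{i},r_{i})$ and whose image is contained in the closed convex hull $\overline{\mathrm{conv}}\bigl(T(\partial B(x_{i},r_{i}))\bigr)$. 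Glueing the maps $T_{i}$ with $T$ along the spheres $\partial B(x_{i},r_{i})$ produces
\begin{equation*}
\hat{T}\colon B[0,R]\to B[0,R], \qquad \hat{T}|_{L}=T, \qquad \hat{T}|_{B[x_{i},r_{i}]}=T_{i},
\end{equation*}
which is well defined and continuous (the pieces match on the overlaps) and compact (its image lies in $\overline{\mathrm{conv}}(T(L))$, which is compact since $T$ is). Since $B[0,R]$ is an AR, the generalized Schauder theorem (Theorem~\ref{generalized Schauder fixed point theorem}) already yields a fixed point of $\hat{T}$ in $B[0,R]$; the task is to show it can be taken inside $L$.

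To this end I would compute the fixed point index. If $T$ has a fixed point on $\partial L=\partial B(0,R)\cup\bigcup_{i}\partial B(x_{i},r_{i})\subseteq L$ we are done, so assume not; then $\hat{T}$ is fixed-point-free on $\partial B(0,R)$ and on each $\partial B(x_{i},r_{i})$, and the indices below are defined. Exactly as in the first part of the proof of Theorem~\ref{ThPapiniZanolinretract}, the compact homotopy $\lambda\mapsto\lambda\hat{T}$ is admissible on $B(0,R)$, hence $i_{X}(\hat{T},B(0,R))=i_{X}(0,B(0,R))=1$. For each $i$, the compact homotopy $G_{\lambda}(x):=\lambda\hat{T}(x)+(1-\lambda)x_{i}$ maps $B[x_{i},r_{i}]$ into itself by convexity, and $G_{\lambda}(x)=x$ would force $\|x-x_{i}\|=\lambda\|\hat{T}(x)-x_{i}\|\leq\lambda r_{i}$, so for $x\in\partial B(x_{i},r_{i})$ only $\lambda=1$ is possible, where $G_{1}=\hat{T}$ is fixed-point-free by assumption; thus $i_{X}(\hat{T},B(x_{i},r_{i}))=i_{X}(x\mapsto x_{i},B(x_{i},r_{i}))=1$. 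Setting $V:=B(0,R)\setminus\bigcup_{i}B[x_{i},r_{i}]$ (the interior of $L$), additivity of the index gives
\begin{equation*}
1=i_{X}(\hat{T},B(0,R))=i_{X}(\hat{T},V)+\sum_{i}i_{X}(\hat{T},B(x_{i},r_{i}))=i_{X}(\hat{T},V)+2n,
\end{equation*}
so $i_{X}(\hat{T},V)=1-2n\neq0$; by the solution property of the index $\hat{T}$ has a fixed point in $V$, and since $\hat{T}=T$ on $V\subseteq L$, this is a fixed point of $T$ in $L$.

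The extension and the verification that $\hat{T}$ is a compact self-map of $B[0,R]$ are routine; the only delicate point, in infinite dimensions, is to take the Dugundji extension with values in the closed convex hull of the boundary image, so that $\hat{T}$ is genuinely compact. The real content is the index bookkeeping: each removed ball on whose bounding sphere $T$ points inward contributes exactly $+1$, so the $2n$ balls lower the index $1$ of the full ball down to $1-2n\neq0$. This is precisely where the hypotheses of Theorem~\ref{Kwong L fixed point theorem} are used, and it also explains why at least two such balls are needed: a single hole would leave index $0$, and indeed the antipodal map on a concentric annulus satisfies the corresponding inclusion on the inner sphere while being fixed-point-free.
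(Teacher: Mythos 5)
Your proof is correct and follows exactly the route the paper indicates: the paper's ``proof'' is only the one-line sketch (assume there is no fixed point on $\partial L$, then $i_{X}(T,\mathrm{int}(L))=1-2n$ by additivity), and your Dugundji extension over the holes followed by the two homotopies and the additivity bookkeeping is precisely the intended way to fill it in. The only point worth flagging is that you tacitly read the hypothesis $T(\partial B(x_{i},r_{i}))\subseteq B[x_{i},r_{i}]$ as holding for all $i\in\{1,\ldots,2n\}$ rather than for $i\in\{1,\ldots,n\}$ as literally written --- this is evidently a typo in the statement, since both your extension step and the count of $2n$ unit contributions (and the paper's own value $1-2n$) require the condition on every removed ball.
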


Using the fixed point index, the proof is quite straightforward: assuming that there are no fixed points on $\partial L$, it is proved that
$i_{X}(T,\text{\rm int}(L))=1-2n$, using the additivity property of the fixed point index.

{} From the computation of $i_{X}(T,\text{\rm int}(L))=1-2n$, we immediately see that the assumptions of the theorem are very restrictive; in fact, to
have a fixed point in $L$, it is sufficient to assume that the number of holes is different from 1. 
Moreover, if we suppose that this number is equal to $1$, the constant map $T\equiv x_{1}$ defines a possible counterexample: clearly $T$ satisfies every
hypothesis, but it has no fixed points in $L$.

The following result shows a possible solution for the case of a ball with a single hole in an infinite dimensional normed space. 
Assuming in addition to the hypotheses of Theorem~\ref{Kwong L fixed point theorem} that $\partial B(0,R)$ is invariant (i.e.~$T(\partial B(0,R))\subseteq \partial B(0,R)$),
the assertion with a single hole is valid, as shown in the expansive form of this theorem.

\begin{theorem}
\label{annulus fixed point theorem}
Let $(X,\|\cdot\|)$ be an infinite dimensional normed linear space. Fix $r_{1},r_{2}\in\mathbb{R}$ such that $0<r_{1}<r_{2}$ and define the annulus
\begin{equation*}
 A:=\{x\in X\colon r_{1}\leq\|x\|\leq r_{2}\}
\end{equation*}
and $A_{d}:=\partial B(0,d)=\{x\in X\colon \|x\|=d\}$, for all $d\in\mathbb{R}^{+}$.
Consider a compact map $T\colon A\to X$.
\begin{itemize}
	\item (\textit{Expansive form}) $T$ has at least a fixed point in $A$ if
					\begin{itemize}
						\item [$(E_{a})$] $\|T(x)\|\leq\|x\|$, $\forall \, x\in A_{r_{1}}$;
						\item [$(E_{b})$] $\|T(x)\|\geq\|x\|$, $\forall \, x\in A_{r_{2}}$.
					\end{itemize}
	\item (\textit{Compressive form}) $T$ has at least a fixed point in $A$ if
					\begin{itemize}
						\item [$(C_{a})$] $\|T(x)\|\geq\|x\|$, $\forall \, x\in A_{r_{1}}$;
						\item [$(C_{b})$] $\|T(x)\|\leq\|x\|$, $\forall \, x\in A_{r_{2}}$.
					\end{itemize}
\end{itemize}
\end{theorem}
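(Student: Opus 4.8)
The plan rests on the one feature separating this setting from the finite-dimensional sphere of Example~\ref{exampleK}: when $X$ is \emph{infinite dimensional}, every sphere $A_{d}=\partial B(0,d)$ is an absolute retract. Indeed, $B[0,d]$ is convex, hence an AR by Dugundji's extension theorem, and in an infinite-dimensional normed space $B[0,d]$ retracts onto its boundary $A_{d}$ (the classical phenomenon behind the failure of the fixed point property for the ball in infinite dimensions); since a retract of an AR is an AR (cf.~\cite[Proposition~7.2, p.~162]{GrDu-2003}), $A_{d}$ is an AR. Consequently, exactly as in the proof of Theorem~\ref{Krasnosel'skii normed fixed point theorem}, the map $h(x):=(\|x\|,x/\|x\|)$, with inverse $h^{-1}(\alpha,z):=\alpha z$, is a homeomorphism of the annulus $A$ onto the cylinder $\mathcal{C}:=[r_{1},r_{2}]\times A_{1}$ (with $A_{1}$ an AR), carrying $A_{r_{1}}$ and $A_{r_{2}}$ onto $\{r_{1}\}\times A_{1}$ and $\{r_{2}\}\times A_{1}$.

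For the \emph{expansive form} I would apply Corollary~\ref{ThPapiniZanolinhomeomorphism} with $Y=X$, $N=A$ and $M=V=A$, so that $M_{l}=A_{r_{1}}$ and $M_{r}=A_{r_{2}}$. Hypothesis $(i)$ is the assumed compactness of $T$. For $(ii)$, given a path $\sigma\colon[0,1]\to A$ with $\sigma(0)\in A_{r_{1}}$ and $\sigma(1)\in A_{r_{2}}$, the continuous real function $s\mapsto\|T(\sigma(s))\|$ satisfies $\|T(\sigma(0))\|\le r_{1}$ by $(E_{a})$ and $\|T(\sigma(1))\|\ge r_{2}$ by $(E_{b})$; taking $s_{0}$ to be the last parameter where it equals $r_{1}$ and $s_{1}>s_{0}$ the first parameter after $s_{0}$ where it equals $r_{2}$, the sub-path $\gamma:=\sigma|_{[s_{0},s_{1}]}$ has $\|T(\gamma(s))\|\in[r_{1},r_{2}]$ throughout, hence $T(\gamma)\subseteq A$, with $T(\gamma)\cap A_{r_{1}}\neq\emptyset$ and $T(\gamma)\cap A_{r_{2}}\neq\emptyset$. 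This is the very sub-path argument used in the proof of Theorem~\ref{Krasnosel'skii normed fixed point theorem}, and Corollary~\ref{ThPapiniZanolinhomeomorphism} then yields $\tilde{z}\in A$ with $T(\tilde{z})=\tilde{z}$.

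For the \emph{compressive form} I would imitate the compressive part of the proof of Theorem~\ref{Krasnosel'skii normed fixed point theorem}, replacing the cone-specific retractions by a retraction onto a slightly larger annulus and invoking Corollary~\ref{ThCompressiveHomeomorphism}. Fix $\varepsilon>0$ with $r_{1}-\varepsilon>0$, set $a':=r_{1}-\varepsilon$, $b':=r_{2}+\varepsilon$ and $M':=\{x\in X\colon a'\le\|x\|\le b'\}$, which is closed and homeomorphic via $h$ to $[a',b']\times A_{1}$. Using the infinite-dimensionality again, build a retraction $r\colon X\to M'$ as the composition $\rho_{2}\circ\rho_{1}$, where $\rho_{1}$ is the radial retraction of $X$ onto $B[0,b']$ and $\rho_{2}$ equals the identity on $\{a'\le\|x\|\le b'\}$ and, on $B[0,a']$, a retraction onto $A_{a'}$ (which exists precisely because $A_{a'}$ is an AR closed in $B[0,a']$); thus $r$ fixes $M'$, sends $\{\|z\|>b'\}$ into $A_{b'}$, and sends $B[0,a']$ into $A_{a'}$. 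Put $\phi:=r\circ T\colon A\to M'$, which is compact. On $A_{r_{1}}$, $(C_{a})$ gives $\|T(x)\|\ge r_{1}>a'$, so $\|\phi(x)\|\in[r_{1},b']$; on $A_{r_{2}}$, $(C_{b})$ gives $\|T(x)\|\le r_{2}<b'$, so $\|\phi(x)\|\in[a',r_{2}]$. Hence condition $(ii)$ of Corollary~\ref{ThCompressiveHomeomorphism} holds (with $a=r_{1}$, $b=r_{2}$), and the corollary produces $\tilde{z}\in A$ with $r(T(\tilde{z}))=\tilde{z}$. Since $\tilde{z}\in A$ forces $a'<\|\tilde{z}\|<b'$, a short norm comparison (if $\|T(\tilde{z})\|>b'$ then $\|r(T(\tilde{z}))\|=b'>\|\tilde{z}\|$; if $\|T(\tilde{z})\|<a'$ then $\|r(T(\tilde{z}))\|=a'<\|\tilde{z}\|$, both impossible) shows $T(\tilde{z})\in M'$, whence $r(T(\tilde{z}))=T(\tilde{z})=\tilde{z}$, a fixed point of $T$ in $A$.

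The only genuine content beyond routine verification is the fact — false in finite dimensions, which is exactly why the hypothesis ``infinite dimensional'' is imposed — that spheres in an infinite-dimensional normed space are absolute retracts, equivalently that a closed ball retracts onto its boundary sphere. This is what makes the annulus a cylinder over an AR base in Corollaries~\ref{ThPapiniZanolinhomeomorphism} and \ref{ThCompressiveHomeomorphism}, and it is precisely the ingredient needed to manufacture the ``inward'' branch of the retraction $r$ in the compressive case. Once this is granted, both forms are mere reductions to results already established, in the same spirit as Theorem~\ref{Krasnosel'skii normed fixed point theorem}.
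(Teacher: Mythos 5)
Your proof is correct and follows essentially the same route as the paper, whose entire argument is that ``the proof is the same as that of Theorem~\ref{Krasnosel'skii normed fixed point theorem}, observing that $A_{1}$ is an AR in an infinite dimensional normed linear space'' (citing Dugundji). You have simply made explicit the details the paper leaves implicit --- in particular that the cone-specific formula for the inner retraction must be replaced by the Dugundji--Klee retraction of the ball onto its boundary sphere (the one place where infinite dimensionality is used), and the final verification that a fixed point of $r\circ T$ is in fact a fixed point of $T$.
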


\begin{proof}
The proof is the same of Theorem~\ref{Krasnosel'skii normed fixed point theorem}, observing that $A_{1}$ is an AR in an infinite dimensional normed linear space 
(cf.~\cite[Theorem~6.2]{Du-1951} or \cite[Theorem~7.7, p.~164--165]{GrDu-2003}).
\end{proof}

The statement is false in finite dimension. For example, it is sufficient to consider a nontrivial rotation $\vartheta$ of centre $0$ in $\mathbb{R}^{2}$. 
Clearly $\vartheta$ is continuous (hence compact) and it has no fixed points.

\begin{remark}
We underline that Theorem~\ref{Kwong L fixed point theorem} remains valid 
if we consider sets homeomorphic to the ball at the place of $B(0,R)$ and $B(x_{i},r_{i})$.
\end{remark}

\begin{remark}
Concerning Theorem~\ref{Kwong L fixed point theorem} set in infinite dimensional spaces, if we replace the assumption $T(\partial B(x_{i},r_{i}))\subseteq
B[x_{i},r_{i}]$, for all $i\in\{1,\ldots,n\}$, with $T(\partial B(x_{i},r_{i}))\subseteq L$, for all $i\in\{1,\ldots,n\}$, we obtain the same thesis by applying
Generalized Schauder Theorem (notice that $L$ is an AR, see~\cite[Theorem~6.2]{Du-1951} or \cite[Theorem~7.7, p.~164--165]{GrDu-2003}).
\end{remark}

\section{Appendix}

In this final section we present the fixed point index defined on sets contained in ANRs. In particular, 
we list the axioms and the main properties which are relevant for this paper.
For more details and proofs, we refer to \cite{Am-1976,Gr-1972,GrDu-2003,Ma-1999,Nu-1985,Nu-1993}
and the references therein. 

Let $X$ be an ANR and $U\subseteq X$ an open subset. Consider a continuous map $f\colon U\to X$
such that $\text{Fix}(f):=\{x\in U\colon f(x)=x\}$ is a compact set (possibly empty) 
and such that there exists an open neighbourhood $V$ of $\text{Fix}(f)$ with $\overline{V}\subseteq U$ such that $f|_{\overline{V}}$ is compact.
If all the previous assumptions are satisfied, the triplet $(X,U,f)$ is said to be \textit{admissible}.

To an admissible triplet $(X,U,f)$ we associate an integer
\begin{equation*}
i_{X}(f,U),
\end{equation*}
called the \textit{fixed point index of $f$ on $U$ relatively to $X$}, satisfying the following properties.

\begin{enumerate}
								
	\item [$(a)$] \textit{Additivity}. If $U_{1},U_{2}\subseteq U$ are open and disjoint subsets and $\text{Fix}(f)\subseteq U_{1}\cup U_{2}$, then
							\begin{equation*}
							i_{X}(f,U)=i_{X}(f,U_{1})+i_{X}(f,U_{2}).
							\end{equation*}

	\item [$(b)$] \textit{Homotopy invariance}. Let $h\colon\mathopen{[}0,1\mathclose{]}\times U \to X$, $h_{\lambda}(x):=h(\lambda,x)$,
							be a continuous homotopy such that	
							$\Sigma:=\bigcup_{\lambda\in\mathopen{[}0,1\mathclose{]}}\{x\in U\colon x-h_{\lambda}(x)=0\}$ 
							is a compact set and there exists an open neighbourhood $V$ of $\Sigma$ such that $\overline{V}\subseteq U$ and
							$h|_{\mathopen{[}0,1\mathclose{]}\times\overline{V}}$ is a compact map. 
							Then $i_{X}(h_{\lambda},U)$ is constant with respect to $\lambda\in\mathopen{[}0,1\mathclose{]}$.

	\item [$(c)$] \textit{Weak normalization}. Let $f(x)=p$, $\forall \, x\in U$, then
							\begin{equation*}
							i_{X}(f,U):=\begin{cases}
							\, 1, & \text{if } p\in U;\\
							\, 0, & \text{if } p\notin U.
							\end{cases}
							\end{equation*}

	\item [$(d)$] \textit{Strong normalization}. If $U=X$ and the map $f$ is compact, then the Lefschetz number of $f$ is defined and
							\begin{equation*}
							i_{X}(f,U)=\Lambda(f).
							\end{equation*}

	\item [$(e)$] \textit{Fixed point property}. If $i_{X}(f,U)\neq0$, then $\text{Fix}(f)\neq\emptyset$, i.e.~$f$ has a fixed point.

	\item [$(f)$] \textit{Excision}. Let $U_{0}$ be an open subset of $U$ such that $\text{Fix}(f)\subseteq U_{0}$. Then
							\begin{equation*}
							i_{X}(f,U)=i_{X}(f,U_{0}).
							\end{equation*}

	\item [$(g)$] \textit{Multiplicativity}. If the triplets $(X_{1},U_{1},f_{1}),(X_{2},U_{2},f_{2})$ are admissible, then
							\begin{equation*}
							i_{X_{1}\times X_{2}}(f_{1}\times f_{2},U_{1}\times U_{2})=i_{X_{1}}(f_{1},U_{1})\cdot i_{X_{2}}(f_{2},U_{2}),
							\end{equation*}
							where $f_{1}\times f_{2}\colon U_{1}\times U_{2}\to X_{1}\times X_{2}$, $(f_{1}\times f_{2})(u,v):=(f_{1}(u),f_{2}(v))$.

	\item [$(h)$] \textit{Commutativity}. Let $U_{1},U_{2}$ be open subsets of the ANRs $X_{1},X_{2}$ respectively.
							Suppose that $f_{1}\colon U_{1}\to X_{2}$, $f_{2}\colon U_{2}\to X_{1}$ are continuous maps and $f_{1}$ is compact 
							in a neighbourhood of
							$\{x\in U_{1}\colon f_{2}f_{1}(x)=x\}$ (or $f_{2}$ is compact in a neighbourhood of
							$\{x\in U_{2}\colon f_{1}f_{2}(x)=x\}$), where $f_{2}f_{1}\colon f_{1}^{-1}(U_{2})\to X_{1}$ and 
							$f_{1}f_{2}\colon f_{2}^{-1}(U_{1})\to X_{2}$.
							
							If the triplets $(f_{1}^{-1}(U_{2}),U_{1},f_{2}f_{1})$, $(f_{2}^{-1}(U_{1}),U_{2},f_{1}f_{2})$ are admissible,
							then
							\begin{equation*}
							i_{X_{1}}(f_{2}f_{1},f_{1}^{-1}(U_{2}))=i_{X_{2}}(f_{1}f_{2},f_{2}^{-1}(U_{1})).
							\end{equation*}

	\item [$(i)$] \textit{Contraction}. Let $Y\subseteq X$ be an ANR such that the inclusion $j\colon Y\to X$ is continuous
							and $f(U)\subseteq Y$. Then	
							\begin{equation*}
							i_{X}(f,U)=i_{Y}(f,U\cap Y).
							\end{equation*}

	\item [$(j)$] \textit{Localization}. Let $U_{1},U_{2}\subseteq U$ be open and disjoint subsets and $\text{Fix}(f)\subseteq U_{1}\cup U_{2}$. 
							If $i_{X}(f,U)\neq 0$ and $i_{X}(f,U_{1})=0$, then $\text{Fix}(f|_{U_{2}})\neq\emptyset$.

	\item [$(k)$] \textit{Multiplicity}. Let $U_{1},U_{2}\subseteq U$ be open and disjoint subsets and $\text{Fix}(f)\subseteq U_{1}\cup U_{2}$. 
							If $i_{X}(f,U)= 0$ and $i_{X}(f,U_{1})\neq 0$, 
							then $\text{Fix}(f|_{U_{1}})\neq\emptyset$ and $\text{Fix}(f|_{U_{2}})\neq\emptyset$.
	
\end{enumerate}

\begin{remark}
It is obvious that the axioms listed above are not independent. For example, properties $(j)$ and $(k)$ are direct consequence of axioms $(a)$ and $(e)$, or further $(h)$ implies $(i)$.
\end{remark}

\begin{remark}
For the application shown in this paper, it is sufficient to give the axioms of the fixed point index in a less abstract framework. If we just assume that
$U$ is an open subset of $X$ and $f\colon\overline{U}\to X$ is a compact map which has no fixed points on $\partial U$, then the triplet $(X,U,f)$ is
admissible.

In that framework the homotopy property can be written in this easier way. Let $X$ be an ANR and $U\subseteq X$ an open subset. 
Let $h\colon\mathopen{[}a,b\mathclose{]}\times\overline{U}\to X$ be a compact map such that $\text{Fix}(h_{\lambda})\cap\partial U =\emptyset$, 
for all $\lambda\in\mathopen{[}a,b\mathclose{]}$. Then $i_{X}(h_{\lambda},U)$ is constant with respect to $\lambda\in\mathopen{[}a,b\mathclose{]}$.
The remaining axioms remain almost unchanged.
\end{remark}

We conclude the appendix with a theorem which is crucial in the proof of the main result. 
It is stated in a less general framework than the one in which we have defined the fixed point index.

\begin{theorem}[Leray-Schauder continuation principle]\label{LScont_prin}
Let $X$ be an ANR, $U$ an open subset of $\mathopen{[}a,b\mathclose{]}\times X$, and let $\psi\colon \overline{U}\to X$ be a compact map such that
\begin{itemize}
 \item [$(i)$] the sets $\partial U$ and $\Sigma:=\{(t,x)\in \overline{U} \colon \psi(t,x)=x\}$ are disjoint;
 \item [$(ii)$] $i_{X}(\psi(a,\cdot), U_{a})\neq 0$, where $U_{a}=\{x\in X\colon (a,x)\in U\}$.
\end{itemize}
Then there exists a continuum $S\subseteq \Sigma$ joining the sets
\begin{equation*}
A=(X\times \{a\})\cap \Sigma  \quad \text{and} \quad B=(X\times \{b\})\cap \Sigma.
\end{equation*}
\end{theorem}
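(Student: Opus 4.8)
The plan is to deduce the continuation principle from the homotopy invariance and additivity properties of the fixed point index, following the classical Leray--Schauder connectedness argument (as in Mawhin or Fitzpatrick--Petryshyn). First I would reduce the statement to showing that the component $S$ of $\Sigma$ meeting the left end $A$ also meets the right end $B$. Suppose, for contradiction, that no component of $\Sigma$ joins $A$ to $B$. Since $\Sigma$ is compact (it is closed in $\overline U$ and contained in the fixed point set of the compact map $\psi$, hence relatively compact), I can invoke the standard separation lemma for compact Hausdorff spaces: if no connected component of a compact space $\Sigma$ meets both of two disjoint closed sets $A,B\subseteq\Sigma$, then $\Sigma$ decomposes as a disjoint union $\Sigma = K_A \sqcup K_B$ of two \emph{compact} sets with $A\subseteq K_A$, $B\subseteq K_B$ (this is the Whyburn/``theta'' lemma on quasi-components equalling components in compact spaces).

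Next, using compactness of $K_A$ and $K_B$ and their disjointness from $\partial U$, I would construct a bounded open set $U'\subseteq U$ (for instance a suitable $\varepsilon$-neighbourhood of $K_A$ intersected with $U$, shrunk so that its closure avoids both $K_B$ and $\partial U$) such that $K_A\subseteq U'$, $\overline{U'}\cap K_B=\emptyset$, $\overline{U'}\cap\partial U=\emptyset$, and $\Sigma\cap\partial U'=\emptyset$. Write $U'_t=\{x\in X\colon (t,x)\in U'\}$ for the $t$-slices. The key consequence of $\Sigma\cap\partial U'=\emptyset$ is that the map $x\mapsto\psi(t,x)$ has no fixed point on $\partial U'_t$ for any $t\in[a,b]$, so by the homotopy invariance property $(b)$ of the index the integer $i_X(\psi(t,\cdot),U'_t)$ is independent of $t$. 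At the endpoint $t=b$ we have $U'_b\cap\Sigma_b=\emptyset$ (since $K_B$, the only part of $\Sigma$ over a neighbourhood of $b$, lies outside $\overline{U'}$), hence there are no fixed points in $U'_b$ at all and $i_X(\psi(b,\cdot),U'_b)=0$; therefore $i_X(\psi(a,\cdot),U'_a)=0$ as well. On the other hand, over $t=a$ we have arranged $K_A\subseteq U'$, so $\Sigma_a\subseteq U'_a$, and by the excision property $(f)$, $i_X(\psi(a,\cdot),U'_a)=i_X(\psi(a,\cdot),U_a)\neq 0$ by hypothesis $(ii)$ --- a contradiction. This forces the existence of a component of $\Sigma$ joining $A$ and $B$, and any such component is the desired continuum $S$.

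The main obstacle I expect is the purely topological bookkeeping in the middle step: verifying that $\Sigma$ is genuinely compact under the stated hypotheses (one needs to know that $\overline{V}$-compactness of $\psi$ near its fixed point set, plus closedness of $\Sigma$ in $\overline U$, yields compactness of $\Sigma$ itself), and then producing the separating open set $U'$ with all four required properties simultaneously --- in particular keeping $\overline{U'}$ away from $\partial U$ so that the triplets $(X,U'_t,\psi(t,\cdot))$ remain admissible and the homotopy property applies uniformly in $t$. The index-theoretic part (homotopy invariance $\Rightarrow$ constancy in $t$, excision at $t=a$, vanishing at $t=b$) is routine once $U'$ is in hand. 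One should also double-check the trivial case $\Sigma\cap(X\times\{a\})=\emptyset$ is excluded: hypothesis $(ii)$ together with the fixed point property $(e)$ guarantees $A\neq\emptyset$, so the statement is not vacuous on that side, and the argument above shows $B\neq\emptyset$ too.
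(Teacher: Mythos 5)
The paper does not actually prove this theorem: it is quoted in the appendix as a known result, with the proof deferred to the cited references on the fixed point index (Granas--Dugundji, Nussbaum, etc.). Your argument is the classical proof of the Leray--Schauder continuation principle found in those sources, and it is essentially correct: compactness of $\Sigma$ follows since $\Sigma$ is closed in $\overline{U}$ and contained in $\mathopen{[}a,b\mathclose{]}\times\overline{\psi(\overline{U})}$; the Whyburn separation lemma gives the decomposition $\Sigma=K_A\sqcup K_B$; the $\delta$-neighbourhood construction of $U'$ works because $K_A$ is compact and disjoint from the closed set $K_B\cup\partial U$ (disjointness from $\partial U$ being hypothesis $(i)$); and the index computation $0=i_X(\psi(b,\cdot),U'_b)=i_X(\psi(a,\cdot),U'_a)=i_X(\psi(a,\cdot),U_a)\neq 0$ (using that $\Sigma\cap(\{b\}\times X)=B\subseteq K_B$ misses $\overline{U'}$, and excision at $t=a$ since $A\subseteq K_A\subseteq U'$) yields the contradiction. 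The one point you should make explicit is that the homotopy invariance you invoke is the \emph{generalized} form, for a family $\psi(t,\cdot)$ over the varying slices $U'_t$ of an open subset of $\mathopen{[}a,b\mathclose{]}\times X$, whereas the paper's axiom $(b)$ is stated only for a homotopy on a fixed open set $U\subseteq X$; the generalized version is standard and is derived from the fixed-domain one by a local-constancy/excision argument, but as written your proof uses more than axiom $(b)$ literally provides. With that caveat, your proof is a valid self-contained substitute for the paper's citation.
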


\section*{Acknowledgment}

This work benefited from long enlightening discussions, helpful suggestions and encouragement of Fabio Zanolin. 
This research was supported by \textit{SISSA - International School for Advanced Studies} and \textit{Univer\-si\-t\`{a} degli Stu\-di di Udine}.

\bibliographystyle{elsart-num-sort}
\bibliography{Feltrin_biblio}

\begin{thebibliography}{10}
\expandafter\ifx\csname url\endcsname\relax
  \def\url#1{\texttt{#1}}\fi
\expandafter\ifx\csname urlprefix\endcsname\relax\def\urlprefix{URL }\fi

\bibitem{Am-1976}
H.~Amann, Fixed point equations and nonlinear eigenvalue problems in ordered
  {B}anach spaces, SIAM Rev. 18 (1976) 620--709.

\bibitem{AvAnHe-2011}
R.~Avery, D.~Anderson, J.~Henderson, Some fixed point theorems of
  {L}eggett-{W}illiams type, Rocky Mountain J. Math. 41 (2011) 371--386.

\bibitem{ChTo-2007}
J.~Chu, P.~J. Torres, Applications of {S}chauder's fixed point theorem to
  singular differential equations, Bull. Lond. Math. Soc. 39 (2007) 653--660.

\bibitem{Du-1951}
J.~Dugundji, An extension of {T}ietze's theorem, Pacific J. Math. 1 (1951)
  353--367.

\bibitem{ErHuWa-1994}
L.~H. Erbe, S.~C. Hu, H.~Wang, Multiple positive solutions of some boundary
  value problems, J. Math. Anal. Appl. 184 (1994) 640--648.

\bibitem{ErWa-1994}
L.~H. Erbe, H.~Wang, On the existence of positive solutions of ordinary
  differential equations, Proc. Amer. Math. Soc. 120 (1994) 743--748.

\bibitem{FoGi-2015pp}
A.~Fonda, P.~Gidoni, Generalizing the {P}oincar\'{e}-{M}iranda theorem: the
  avoiding cones condition, preprint (2015).

\bibitem{FrTo-2008}
D.~Franco, P.~J. Torres, Periodic solutions of singular systems without the
  strong force condition, Proc. Amer. Math. Soc. 136 (2008) 1229--1236.

\bibitem{GrKoWa-2008}
J.~R. Graef, L.~Kong, H.~Wang, Existence, multiplicity, and dependence on a
  parameter for a periodic boundary value problem, J. Differential Equations
  245 (2008) 1185--1197.

\bibitem{Gr-1972}
A.~Granas, The {L}eray-{S}chauder index and the fixed point theory for
  arbitrary {ANR}s, Bull. Soc. Math. France 100 (1972) 209--228.

\bibitem{GrDu-2003}
A.~Granas, J.~Dugundji, Fixed point theory, Springer Monographs in Mathematics,
  Springer-Verlag, New York, 2003.

\bibitem{GuLa-1988}
D.~J. Guo, V.~Lakshmikantham, Nonlinear problems in abstract cones, Academic
  Press, Inc., Boston, MA, 1988.

\bibitem{Hu-1965}
S.-t. Hu, Theory of retracts, Wayne State University Press, Detroit, 1965.

\bibitem{Ka-2000}
J.~Kampen, On fixed points of maps and iterated maps and applications,
  Nonlinear Anal. 42 (2000) 509--532.

\bibitem{Kr-1960}
M.~A. Krasnosel'ski{\u\i}, Fixed points of cone-compressing or cone-extending
  operators, Soviet Math. Dokl. 1 (1960) 1285--1288.

\bibitem{Kr-1964}
M.~A. Krasnosel'ski{\u\i}, Positive solutions of operator equations, P.
  Noordhoff Ltd. Groningen, 1964.

\bibitem{Kw-2008fpta}
M.~K. Kwong, On {K}rasnoselskii's cone fixed point theorem, Fixed Point Theory
  Appl. (2008) Art. ID 164537, 18 pp.

\bibitem{Kw-2008}
M.~K. Kwong, The topological nature of {K}rasnoselskii's cone fixed point
  theorem, Nonlinear Anal. 69 (2008) 891--897.

\bibitem{LeWi-1979}
R.~W. Leggett, L.~R. Williams, Multiple positive fixed points of nonlinear
  operators on ordered {B}anach spaces, Indiana Univ. Math. J. 28 (1979)
  673--688.

\bibitem{LeWi-1980}
R.~W. Leggett, L.~R. Williams, A fixed point theorem with application to an
  infectious disease model, J. Math. Anal. Appl. 76 (1980) 91--97.

\bibitem{Mar-2006}
M.~M. Marsh, Fixed point theorems for partially outward mappings, Topology
  Appl. 153 (2006) 3546--3554.

\bibitem{Ma-1999}
J.~Mawhin, Leray-{S}chauder degree: a half century of extensions and
  applications, Topol. Methods Nonlinear Anal. 14 (1999) 195--228.

\bibitem{Ma-2013ans}
J.~Mawhin, Variations on {P}oincar\'e-{M}iranda's theorem, Adv. Nonlinear Stud.
  13 (2013) 209--217.

\bibitem{Ma-2013}
J.~Mawhin, Variations on some finite-dimensional fixed-point theorems,
  Ukrainian Math. J. 65 (2013) 294--301.

\bibitem{Nu-1985}
R.~D. Nussbaum, The fixed point index and some applications, vol.~94 of
  S\'eminaire de Math\'ematiques Sup\'erieures [Seminar on Higher Mathematics],
  Presses de l'Universit\'e de Montr\'eal, Montreal, QC, 1985.

\bibitem{Nu-1993}
R.~D. Nussbaum, The fixed point index and fixed point theorems, in: Topological
  methods for ordinary differential equations ({M}ontecatini {T}erme, 1991),
  vol. 1537 of Lecture Notes in Math., Springer, Berlin, 1993, pp. 143--205.

\bibitem{PaZa-2007}
D.~Papini, F.~Zanolin, Some results on periodic points and chaotic dynamics
  arising from the study of the nonlinear {H}ill equations, Rend. Semin. Mat.
  Univ. Politec. Torino 65 (2007) 115--157.

\bibitem{PiZa-2005}
M.~Pireddu, F.~Zanolin, Fixed points for dissipative-repulsive systems and
  topological dynamics of mappings defined on {$N$}-dimensional cells, Adv.
  Nonlinear Stud. 5 (2005) 411--440.

\bibitem{PiZa-2007}
M.~Pireddu, F.~Zanolin, Cutting surfaces and applications to periodic points
  and chaotic-like dynamics, Topol. Methods Nonlinear Anal. 30 (2007) 279--319.

\bibitem{Pr-2007}
R.~Precup, A vector version of {K}rasnosel'ski\u\i 's fixed point theorem in
  cones and positive periodic solutions of nonlinear systems, J. Fixed Point
  Theory Appl. 2 (2007) 141--151.

\bibitem{Sc-1976}
K.~Schmitt, Fixed point and coincidence theorems with applications to nonlinear
  differential and integral equations, S{\'e}minaires de Math{\'e}matique
  Appliqu{\'e}e et M{\'e}canique, Rapport No. 97, Universit\'{e} catholique de
  Louvain, Vander, Louvain-la-Neuve, 1976.

\bibitem{To-2003}
P.~J. Torres, Existence of one-signed periodic solutions of some second-order
  differential equations via a {K}rasnoselskii fixed point theorem, J.
  Differential Equations 190 (2003) 643--662.

\end{thebibliography}

\bigskip
\begin{flushleft}

{\small{\it Preprint}}

{\small{\it March 2015}}

\end{flushleft}

\end{document}